\DeclareMathOperator{\Jac}{Jac}
\DeclareMathOperator{\USp}{USp}
\DeclareMathOperator{\SU}{SU}
\DeclareMathOperator{\Sp}{Sp}
\DeclareMathOperator{\U}{U}
\DeclareMathOperator{\GL}{GL}
\DeclareMathOperator{\ST}{ST}
\DeclareMathOperator{\Gal}{Gal}
\DeclareMathOperator{\End}{End}
\DeclareMathOperator{\Aut}{Aut}
\DeclareMathOperator{\diag}{diag}
\DeclareMathOperator{\antidiag}{antidiag}
\DeclareMathOperator{\TL}{TL}
\DeclareMathOperator{\AST}{AST}
\DeclareMathOperator{\LL}{L}
\DeclareMathOperator{\tr}{tr}
\DeclareMathOperator{\Tr}{Tr}
\DeclareMathOperator{\rk}{rk}
\DeclareMathOperator{\NS}{NS}
\newtheorem{theorem}{Theorem}[section]
\newtheorem{example}[theorem]{Example}
\newtheorem{proposition}[theorem]{Proposition}
\newtheorem{lemma}[theorem]{Lemma}
\newtheorem{definition}[theorem]{Definition}
\newtheorem{corollary}[theorem]{Corollary}
\newtheorem{conjecture}[theorem]{Conjecture}
\theoremstyle{remark}
\newtheorem{remark}{Remark}
\newtheorem*{remark*}{Remark}
\author{Heidi Goodson}
\address{Department of Mathematics, Brooklyn College, City University of New York; 2900 Bedford Avenue, Brooklyn, NY 11210 USA}
\email{heidi.goodson@brooklyn.cuny.edu}
\title[Sato-Tate Distributions of  Catalan Curves]{Sato-Tate Distributions of  Catalan Curves}
\begin{document}

\begin{abstract}
For distinct odd primes $p$ and $q$, we define the Catalan curve $C_{p,q}$ by the affine equation $y^q=x^p-1$. In this article we construct the Sato-Tate groups of the Jacobians in order to study the limiting distributions of coefficients of their normalized L-polynomials. Catalan Jacobians are nondegenerate and simple with noncyclic Galois groups (of the endomorphism fields over $\mathbb Q$), thus making them interesting varieties to study in the context of Sato-Tate groups. We compute both statistical and numerical moments for the limiting distributions. Lastly, we determine the Galois endomorphism types of the Jacobians using both old and new techniques. 
\end{abstract}

\subjclass[2010]{11M50, 11G10, 11G20, 14G10}
\keywords{Sato-Tate groups, Sato-Tate distributions, Jacobian varieties, endomorphism algebras}

\maketitle

\section{Introduction}\label{sec:intro}

Let $p$ and $q$ be distinct odd primes. The nonsingular genus $(p-1)(q-1)/2$ curve $C_{p,q}$ defined by the affine equation
$$y^q=x^p-1$$
is called \emph{Catalan curve}, likely named after the famous Catalan conjecture\footnote{The Catalan conjecture was proved by Mih\u{a}ilescu \cite{Mihailescu2004} more than 150 years after Catalan published his conjecture.} regarding consecutive integers that are perfect powers.  The primary goals of this article are to study  the limiting distributions of coefficients of normalized L-polynomials of the Jacobians of Catalan curves and to study their real endomorphism algebras. These types of distributions are called {\it Sato-Tate distributions}, named for Mikio Sato and John Tate who independently made conjectures for those associated to elliptic curves in the 1960s. Their conjectures are known to be true for elliptic curves without complex multiplication defined over totally real fields and for all elliptic curves with complex multiplication. 

Recent work has expanded this field to the study of distributions for abelian varieties of dimension at least 2. The generalized Sato-Tate conjecture  predicts that the distributions converge to the distributions of traces in a compact Lie group referred to as the Sato-Tate group. The Sato-Tate group is related to the Mumford-Tate group, Hodge group, motivic Galois group, and $l$-adic monodromy group, thus placing this area of research at the intersection of many important fields in number theory, group theory, and algebraic geometry. 

Before describing the work in higher genus, we first recall the original Sato-Tate conjecture for elliptic curves.  Let $F$ be a number field,  $E/F$ be an elliptic curve without complex multiplication (CM), and $v$ be a finite prime of $F$ such that $E$ has good reduction at $v$.  By a theorem of Hasse,  the number of ${F}_{q_v}$ points of $E$ is $q_v+1-a_v$, where ${F}_{q_v}$ denotes the residue field of $v$ and  $a_v$ is an integer (called the trace of Frobenius) satisfying $|a_v| \leq 2{q_v}^{1/2}$.  The Sato-Tate conjecture predicts that,  as $v$ varies through the primes of good reduction for $E$, the normalized Frobenius traces $a_v/{q_v}^{1/2}$ are distributed in the interval $[-2,2]$ with respect to the image of the Haar measure on the special unitary group $\SU(2)$. This conjecture  was proved in 2008 for non-CM elliptic curves  defined over totally real fields (see \cite{Barnet2011, Clozel2008,Harris2010,Taylor2008}). 
 
The distributions of the normalized Frobenius traces were known much earlier for CM elliptic curves over all fields: they are distributed with respect to the image of the Haar measure on either the  unitary group $\U(1)$ or the normalizer of $\U(1)$ in $\SU(2)$, depending on whether or not the field of definition contains the field of complex multiplication (see the exposition in \cite{Banaszak2015}).  

The generalized Sato-Tate conjecture for an abelian variety predicts the existence of a compact Lie group that determines the limiting distribution of normalized local Euler factors (see, for example, \cite{SutherlandNotes}).  In our work, the abelian varieties will be the Jacobians of curves, and so we now state the conjecture  specifically for Jacobian varieties. 

Let $C$ be a smooth, projective, genus $g$ curve defined over a number field $F$. The Sato-Tate group of the Jacobian of $C$, $\ST(\Jac(C))\subseteq \USp(2g)$, is a compact Lie group satisfying the following property: for each prime $\mathfrak p$ at which $C$ has good reduction, there exists a conjugacy class of $\ST(\Jac(C))$ whose characteristic polynomial equals the normalized $\LL$-polynomial at $\mathfrak p$
 \begin{equation}\label{eqn:normLpoly}\overline L_{\mathfrak p}(C,T) = T^{2g}+a_1T^{2g-1} +a_2 T^{2g-2} + \cdots + a_2T^2+a_1T+1.\end{equation}

\begin{conjecture}\label{conjec:generalST}
(Generalized Sato-Tate Conjecture)
 Let  $(x_{\mathfrak p})$ be the sequence  of conjugacy classes of normalized images of Frobenius elements in $\ST(\Jac(C))$ at primes $\mathfrak p$ of good reduction for $\Jac(C)$, ordered by norm.  Then the sequence $(x_{\mathfrak p})$ is equidistributed   with respect to the pushforward  of the Haar measure of $\ST(\Jac(C))$ to its space of conjugacy classes.
\end{conjecture}

The generalized Sato-Tate conjecture was proved for CM abelian varieties in \cite{Joh17}.  Beyond proving the generalized Sato-Tate conjecture, it is also important to determine the explicit embedding of the Sato-Tate group   inside $\USp(2g)$ in order to provide an explicit description of the limiting distributions of the normalized $\LL$-polynomial. The distributions can also give us information about certain arithmetic invariants, such as the real endomorphism algebra and the N\'eron-Severi group of the abelian variety. 

Determining these Sato-Tate groups is the source of ongoing interest and work. The current literature contains many articles for abelian varieties of fixed, small genus. For example, \cite{FKS2012} and \cite{FiteKS_genus3_2019, FKS2021satotate} determine all possible Sato-Tate groups in dimension 2 and 3, respectively, by determining which subgroups of the unitary symplectic group satisfy certain axioms (see Section \ref{sec:SatoTateAxioms}). Other articles determine Sato-Tate groups for certain families of genus 2 and 3 curves (see \cite{FS2016, LarioSomoza2018}) or for twists of curves (see \cite{Arora2016, FiteLorenzoSutherland2018, FiteSuthlerland2014}). 

As noted in \cite{FKS2021satotate}, it is not expected, in dimension greater than 3, that every  group satisfying the Sato-Tate axioms (see Section \ref{sec:SatoTateAxioms}) can be realized using Jacobians of curves. There are currently two articles that study families of Jacobian varieties of arbitrarily high genus (see \cite{EmoryGoodson2020,FiteGonzalezLario2016}).  In this article we provide a new example of an infinite family of Sato-Tate groups that can be realized by higher dimensional Jacobian varieties. Unlike the earlier articles, the component groups of these Sato-Tate groups are products of cyclic groups. Furthermore, we go a step further  by computing the Galois endomorphism types of the Jacobians.

\subsection*{Organization of the paper}

In Section \ref{sec:background} we provide an overview of this area of research, as well as some results that will be applied when computing the Sato-Tate groups of Jacobians of Catalan curves (Catalan Jacobians). We begin the section with a precise definition of the algebraic Sato-Tate group, which is needed in order to define the Sato-Tate group of an abelian variety. We then discuss nondegeneracy and its relationship to the Hodge conjecture in Section \ref{sec:nondegeneracy}. In Section \ref{sec:Lefschetz} we remind readers of the definition of the twisted Lefschetz group. We prove in Proposition \ref{prop:AST=TL} that, for Catalan Jacobians, this group is equal to the algebraic Sato-Tate group. We end the background section with a discussion of the Sato-Tate axioms in Section \ref{sec:SatoTateAxioms}.

In Section \ref{sec:EndoGaloisAction} we define a generator for the group of endomorphisms of the Catalan Jacobian. Catalan curves have CM by the field $\mathbb Q(\zeta_{pq})$, and we study the action of  $\Gal(\mathbb Q(\zeta_{pq})/\mathbb Q)$ on the endomorphism. We give complete descriptions of the actions in Propositions \ref{prop:sigma_q} and \ref{prop:sigma_p}. These descriptions are needed in order to determine the twisted Lefschetz groups of Catalan Jacobians.

The main goal of Section \ref{sec:SatoTateGroup} is to compute the Sato-Tate groups of  Catalan Jacobians.  Since the algebraic Sato-Tate conjecture for Catalan Jacobians is explained by endomorphisms, the component group of the Sato-Tate group is isomorphic to the Galois group $\Gal(\mathbb{Q}(\zeta_{pq})/\mathbb{Q})$. This immediately tells us that the component group is a product of cyclic groups of orders $p-1$ and $q-1$. Still, we give explicit generators for the component group in Theorem \ref{thm:STgroupqsmaller} in order to obtain the  limiting distribution of normalized local Euler factors.

There is  an additional interesting result in Section \ref{sec:SatoTateGroup}. Lemma \ref{lemma:BuildGamma}, which we use to determine component group generators, can be applied more broadly than the family of Jacobian varieties that we study in this paper. For any nondegenerate abelian variety with  endomorphism given by a diagonal matrix and with CM by a cyclotomic field, this lemma can be used to determine the component group generators. This may seem like a very specific scenario, but we note that this describes the Jacobians considered in, for example, \cite{EmoryGoodson2020, FiteGonzalezLario2016, LarioSomoza2018} and \cite{FS2016} (for certain values of $c$).

In Section \ref{sec:moments}, we compute moment statistics associated to the Sato-Tate groups of Catalan Jacobians. These moment statistics can be used to verify the equidistribution statement of the generalized Sato-Tate conjecture by comparing them to moment statistics obtained for the traces $a_i$ in the normalized $L$-polynomial. The numerical moment statistics are an approximation since one can only ever compute them up to some prime. We list moments for some Catalan Jacobians in Table \ref{table:momemntsa1} of Section \ref{sec:moments}.

In Section \ref{sec:GaloisType}, we study the Galois endomorphism types of Catalan Jacobians. There is a nice correspondence between the Sato-Tate group and the real endomorphism algebra for abelian varieties of dimension $g\leq 3$ (see Theorem 1.4 of \cite{FKS2012}). This may not always extend to higher dimension, but it does for Catalan Jacobians and other nondegenerate abelian varieties. We  approach this problem in two ways: through data obtained from moment statistics and by working with Rosati forms. The latter is the more traditional method of determining real endomorphism algebras, whereas the former is a new technique that uses recent results of Costa, Fit\'e, and Sutherland \cite{Costa2019}.

\subsection*{Notation and conventions} We begin by fixing notation used in later sections.  Let $A$ be an abelian variety defined over a number field $F$. The ring of endomorphisms of $A/F$ is denoted by $\End(A_F)$, or simply $\End(A)$ if the field of definition is clear from the context. 

The curve $y^q=x^p-1$ is denoted by $C_{p,q}$,  and we assume throughout the paper that $p$ and $q$ are distinct odd primes. We will write $\zeta_m$ for a primitive $m^{th}$ root of unity. For any rational number $x$ whose denominator is coprime to an integer $r$, $\langle x \rangle_r$ denotes the unique representative of $x$ modulo $r$ between 0 and $r-1$. 

Let $I$ denote the $2\times 2$ identity matrix and define the matrices
\begin{align}\label{eqn:Jmatrix}
    J=\begin{pmatrix}0&1\\-1&0\end{pmatrix}
\end{align}
and
\begin{align}\label{eqn:Zmatrix}
    Z=Z_{pq}=\diag(\zeta_{pq},\overline{\zeta_{pq}})
\end{align}
The symplectic form considered throughout the paper is given by
\begin{align}\label{eqn:symplecticH}
    H=\diag(\underbrace{J,\dots,J}_g).
\end{align}
Lastly, for any positive integer $n$, we define the following subgroup of the unitary symplectic group $\USp(2n)$
\begin{equation*}\label{eqn:U1n}
    \U(1)^n:=\left\langle \diag( u_1,\overline{u_1},\ldots, u_n,\overline{u_n}):u_i\in \mathbb C^\times, |u_i|=1\right\rangle.
\end{equation*}

\section{Background}\label{sec:background}

\subsection{An $\ell$-adic construction of the Sato-Tate group}
\label{sec:ASTgroupdefinition}
We begin by defining both the algebraic Sato-Tate group and the Sato-Tate group. We follow the exposition of \cite{EmoryGoodson2020} and \cite[Section 3.2]{SutherlandNotes}. See also \cite[Chapter 8]{SerreNXP}.

Let $A/ F$ be an algebraic variety of dimension $g$ defined over the number field $F$. We define the Tate module $T_{\ell}:=\varprojlim_{n} A[\ell^n]$, where $\ell$ is prime, a free $\mathbb{Z}_{\ell}$-module of rank $2g$,
and the rational Tate module $V_{\ell}:=T_{\ell}\otimes_{\mathbb{Z}} \mathbb{Q}$, a $\mathbb{Q}_{\ell}$-vector space of dimension $2g.$ The Galois action on the Tate module is given by an $\ell$-adic representation 
\begin{align}\label{eqn:artinrepresntation}
    \rho_{A,\ell}:\Gal(\overline{F}/F) \rightarrow \Aut(V_{\ell}) \cong \GL_{2g}(\mathbb{Q}_{\ell}).
\end{align}
The $\ell$-adic monodromy group of $A$, denoted $G_{A,\ell}$, is the Zariski closure of the image of this map in $\GL_{2g}(\mathbb{Q}_{\ell})$, and we define $G^{1}_{A,\ell}:=G_{A,\ell}\cap \Sp_{2g}(\mathbb{Q}_{\ell})$. Banaszak and Kedlaya proposed in \cite[Conjecture 2.1]{Banaszak2015} the following conjecture that was partly intended to be a refinement of the Mumford-Tate conjecture.

\begin{conjecture}[Algebraic Sato-Tate Conjecture]\label{conjec:AST}  There is an algebraic subgroup $\AST(A)$ of $\Sp_{2g}$ over $\mathbb Q$, called the \textbf{algebraic Sato-Tate group of $A$}, such that $\AST^0(A)$ is reductive and, for each prime $\ell$, $G^{1}_{A,\ell}=\AST(A)\otimes_{\mathbb Q} \mathbb Q_{\ell}$.
\end{conjecture}
When this conjecture holds we can define the \textbf{Sato-Tate group} of $A$, denoted   $\ST(A)$, to be a maximal compact Lie subgroup of $G^{1}_{A,\ell}\otimes_{\mathbb{Q}_{\ell}}\mathbb{C}=\AST(A)\otimes_\mathbb{Q} \mathbb{C}$ contained in $\USp(2g)$. It is conjectured that $\ST(A)$ is, up to conjugacy in $\USp(2g)$, independent of the choice of the prime $\ell$ and of the embedding of $\mathbb{Q}_\ell$ in $\mathbb{C}$ and so we will refer to $\ST(A)$ as {\it the} Sato-Tate group of $A$ (see, for example, \cite{SutherlandNotes}). While the Sato-Tate group is a compact Lie group, it may not be connected \cite{FS2016}. We denote  the connected component of the  identity  (also called the identity component) of $\ST(A)$ by  $\ST^0(A)$.

\subsection{Nondegeneracy}\label{sec:nondegeneracy}

Let $A$ be a nonsingular projective variety over $\mathbb C$. We denote the (complexified) Hodge ring of $A$ by
$$\mathscr B^*(A):=\displaystyle\sum_{d=0}^{\dim(A)} \mathscr B^d(A), $$
where $\mathscr B^d(A)=(H^{2d}(A,\mathbb Q)\cap H^{d,d}(A))\otimes \mathbb C$ is the $\mathbb C$-span of Hodge cycles of codimension $d$ on $A$.  
Furthermore, we define the ring $$\mathscr D^*(A):=\displaystyle\sum_{d=0}^{\dim(A)} \mathscr D^d(A)$$ 
where $\mathscr D^d(A)$ is the $\mathbb C$-span of classes of intersection of $d$-divisors. This is the subring of $\mathscr B^*(A)$ generated by the divisor classes, i.e.  generated by $\mathscr B^1(A)$. In general, it is known that we have containment $\mathscr D^*(A) \subseteq \mathscr B^*(A)$, and we say that an abelian variety $A$ is  \textbf{nondegenerate} if we have equality, i.e.  $\mathscr D^*(A) = \mathscr B^*(A)$ (see \cite{Aoki2002}). Furthermore, an abelian variety $A$ is said to be \textbf{stably nondegenerate} if, for any integer $k\geq 1$, $\mathscr D^*(A^k) = \mathscr B^*(A^k)$ \cite{Aoki2002,Hazama89}.  If $\mathscr B^*(A)$ is not generated by the divisor classes $\mathscr B^1(A)$, i.e.  $\mathscr D^*(A) \not= \mathscr B^*(A)$, then $A$ is said to be \textbf{degenerate}. The additional Hodge cycles that are not generated by divisor classes are referred to as \textbf{exceptional cycles} or \textbf{exceptional classes}  (see, for example, \cite{Murty1984}).

While nondegeneracy extends to powers of stably nondegenerate abelian varieties, it does not necessarily apply to products of nonisogenous varieties. For example, Shioda demonstrates in \cite{Shioda82} that the Jacobian of the genus 4  curve $y^2=x^9-1$ is degenerate, though it is the product of two simple, nondegenerate varieties. Aoki proves in \cite{Aoki2004} that generalized Catalan curves $C_{p^\mu,q^\nu}:y^{q^\nu}=x^{p^\mu}-1$, where $(\mu,\nu)\not=(1,1)$, also exhibit this phenomenon: each of the factors of the Jacobian are nondegenerate but $\Jac(C_{p^\mu,q^\nu})$ is itself degenerate. Hazama explains in \cite{Hazama89} that this can occur when one of the simple factors is of type-IV in the Albert's classification.

Hazama proves in Theorem 1.2 of \cite{Hazama89} that $A$ is stably nondegenerate if and only if the dimension of its Hodge group is maximal\footnote{In \cite{FiteGonzalezLario2016}, the authors use the word \emph{nondegenerate} to describe abelian varieties with this property.}. When $A$ is an abelian variety with  CM and is absolutely simple, this is equivalent to saying that the CM type is nondegenerate (see, for example, \cite{Aoki2002, FiteGonzalezLario2016}). The following result is proved in \cite{Hazama97}.

\begin{proposition}\label{prop:CatalanNondegnerate}
The Jacobian variety $\Jac(C_{p,q})$ of the Catalan curve is absolutely simple and  nondegenerate.
\end{proposition}
Hazama proves the nondegeneracy by proving that the CM-type is nondegenerate. This is done by verifying the non-vanishing of certain character sums attached to the Jacobian.

Nondegeneracy is related to the Hodge conjecture. Let $\mathscr C^d(A)$ be the subspace of $\mathscr B^d(A)$ generated by the classes of algebraic cycles on $A$ of codimension $d$. Then 
$$\mathscr D^d(A) \subseteq \mathscr C^d(A) \subseteq \mathscr B^d(A)$$
and the \textbf{Hodge conjecture} for $A$ asserts that $\mathscr C^d(A) = \mathscr B^d(A)$ for all $d$ \cite{Aoki2002, Shioda82}. It is clear from the definition that if $A$ is nondegenerate then the Hodge conjecture holds, and  if the Hodge conjecture does not hold then $A$ must be degenerate. However, there are many cases where the Hodge conjecture holds for degenerate abelian varieties (see, for example, \cite{Pohlmann1968,Shioda82}).

\subsection{The Twisted Lefschetz group}\label{sec:Lefschetz}

Banaszak and Kedlaya introduced in \cite{Banaszak2015} the \textbf{twisted Lefschetz  group}, denoted $\TL(A)$, which is a closed algebraic subgroup of  $\Sp_{2g}$ defined by
\begin{align}\label{eqn:TL}
\TL(A):=\bigcup_{\tau \in \Gal(\overline{F}/F)} \LL(A)(\tau),
\end{align}
where $\LL(A)(\tau):=\{\gamma \in \Sp_{2g}\mid \gamma \alpha \gamma^{-1}=\tau(\alpha) \text{ for all }\alpha \in \End(A_{\overline{F}})_\mathbb{Q}\}.$ 
When $\tau$ is the identity automorphism, $\LL(A)(\tau)$ forms a group, called the Lefschetz group, which is denoted in the literature as simply $\LL(A)$.

Banaszak and Kedlaya prove in \cite[Theorem 6.1]{Banaszak2015} that if the Mumford-Tate conjecture is explained by endomorphisms and the twisted Lefschetz group of $A/\overline{F}$ is connected, then the algebraic Sato-Tate conjecture holds and $\AST(A) = \TL(A).$ In this case, we say that the algebraic Sato-Tate conjecture is also  explained by endomorphisms. It follows from \cite[Theorem 6.1]{Banaszak2015} and work of Serre in \cite[Section 8.3.4]{SerreNXP} that when the algebraic Sato-Tate conjecture is  explained by endomorphisms the component group $\ST(A)/\ST^0(A)$ is isomorphic to the Galois group $\Gal(K/F)$, where $K$ is the \textbf{endomorphism field} of $A$, i.e. the minimal extension over which all the endomorphisms of the abelian variety $A$ are defined (see, for example, \cite[Theorem 3.12]{SutherlandNotes} and \cite[Proposition 2.17]{FKS2012}).  We can use the twisted Lefschetz group to write down explicit generators of the component group of the Sato-Tate group.

The Jacobian varieties we study in this paper satisfy these conditions, and so we have the following result.

\begin{proposition}\label{prop:AST=TL}
The algebraic Sato-Tate Conjecture holds for the Catalan curve $\Jac(C_{p,q})$ with $\AST(\Jac(C_{p,q}))=\TL(\Jac(C_{p,q}))$.
\end{proposition}

\begin{proof}
In Proposition \ref{prop:CatalanNondegnerate} we saw that $\Jac(C_{p,q})$ is a nondegenerate CM abelian variety. The result then  follows from Theorems 6.1 and 6.6 in \cite{Banaszak2015}.
\end{proof}

\subsection{The Sato-Tate axioms}\label{sec:SatoTateAxioms}
We conclude the background section by recalling some necessary conditions for a subgroup of $\USp(2g)$ to occur as a Sato-Tate group. These axioms will not be used in our work, but they have been used to classify all possible Sato-Tate groups for a given dimension $g\leq 3$ (see, for example, \cite{FKS2012, FiteKS_genus3_2019,FKS2021satotate}). We state the axioms as they are  laid out in \cite{FKS2021satotate}.  For a group $G$ with identity component $G^0$, the Sato-Tate group axioms in dimension $g$ are as follows.

\begin{enumerate}
    \item[(ST1)] The group $G$ is a closed subgroup of $\USp(2g)$.
    \item[(ST2)] (Hodge condition) There exists a homomorphism $\theta\colon \U(1)\to G^0$ such that $\theta(u)$ has eigenvalues $u,u^{-1}$ each with multiplicity $g$. The image of such a $\theta$ is called a {\it Hodge circle}, and the set of all Hodge circles generates a dense subgroup of $G^0$.
    \item[(ST3)] (Rationality condition) For each component $H$ of $G$ and irreducible character $\chi$ of $\GL(\mathbb{C}^{2g})$, the expected value (under the Haar measure) of $\chi(\gamma)$ over $\gamma\in H$ is an integer. 
    \item[(ST4)] (Lefschetz condition) The subgroup of $\USp(2g)$ fixing $\End(\mathbb{C}^{2g})^{G^0}$ is equal to $G^0$. 
\end{enumerate}

Proposition 3.2 of \cite{FKS2012} proves that, for any abelian variety satisfying both the Mumford-Tate conjecture and the algebraic Sato-Tate conjecture, the Sato-Tate group $G=\ST(A)$ satisfies the first three axioms. The last axiom is satisfied if, in addition, the Hodge group of $A$ equals the Lefschetz group of $A$ (see Proposition 3.3 of \cite{FKS2021satotate}). Catalan Jacobians  satisfy these properties, and so their Sato-Tate groups  satisfy the four axioms. 

\section{Endomorphisms of Catalan Jacobians} \label{sec:EndoGaloisAction}

Let $p\not=q$ be odd primes. The Jacobian of $C_{p,q}$ has complex multiplication by the field $K=\mathbb Q(\zeta_{pq})$, and so the Galois group $\Gal(K/\mathbb Q)$ is isomorphic to a product of cyclic groups $\mathbb Z/p\mathbb Z^\times\times\mathbb Z/q\mathbb Z^\times$. The main goals of this section are to give an explicit model of a generator for the group of endomorphisms  of $\Jac(C_{p,q})$ and to describe how the Galois group acts on the endomorphism.  We begin by defining some necessary notation that will be used when defining both the endomorphism and the Sato-Tate group of $\Jac(C_{p,q})$. 

\subsection{Galois elements and some notation}

Let $c$ be a generator for $\mathbb Z/p\mathbb Z^\times$ and $d$ be a generator for $\mathbb Z/q\mathbb Z^\times$. These elements yield the following generators of $\Gal(K/\mathbb Q)$:
\begin{center}
\begin{multicols}{2}
$\sigma_p\colon\begin{cases}
    \zeta_{p}\mapsto  \zeta_{p}^c\\
    \zeta_{q}\mapsto  \zeta_{q}
\end{cases}$

$\sigma_q\colon\begin{cases}
    \zeta_{p}\mapsto  \zeta_{p}\\
    \zeta_{q}\mapsto  \zeta_{q}^d.
\end{cases}$
\end{multicols}
\end{center}
In this paper, we will be working with powers of $\zeta_{pq}$ of the form $\zeta_{pq}^{q(a+1)-pb}.$ We  compute the actions of the Galois elements on these powers of $\zeta_{pq}$ to be
\begin{align}\label{eqn:GaloisActions}
    \sigma_p\left(\zeta_{pq}^{q(a+1)-pb}\right)&=\zeta_{pq}^{qc(a+1)-pb}& \text{and}&&\sigma_q\left(\zeta_{pq}^{q(a+1)-pb}\right)&=\zeta_{pq}^{q(a+1)-pdb}.
\end{align}

We now define some additional notation. Let $k_0=0$ and for any $b$ satisfying $1\leq b\leq q-1$, define
\begin{align}\label{eqn:kbdefinition}
    k_b=\left\lfloor\frac{pb-q-1}{q}\right\rfloor.
\end{align} 

The $k_b$-values are increasing with respect to $b$: $k_{b_1}\leq k_{b_2}$ whenever ${b_1}\leq {b_2}$. When $p<q$, the value $k_b$ may be negative for some values of $b$, and we let $\kappa_t$ be the sum
\begin{align}\label{eqn:kprimedefinition}
    \kappa_t=\sum_{\substack{b=0\\ k_b\geq 0}}^t(k_b+1),
\end{align} 
where we restrict to adding only the nonnegative values $k_b.$ 

\subsection{Defining the endomorphism}

Let $\alpha\colon C_{p,q}\to C_{p,q}$ be the curve automorphism defined by $\alpha(x,y)=(\zeta_p x,\zeta_q y)=(\zeta_{pq}^q x,\zeta_{pq}^p y)$. This automorphism has order $pq$, and so it is a generator of the automorphism group of $ C_{p,q}$. It is then clear that the endomorphism ring of $\Jac(C_{p,q})$ satisfies the following.
\begin{proposition}\label{prop:endoringJac}
Let $K=\mathbb{Q}(\zeta_{pq})$. Then 
\begin{align*}
\End(\Jac(C_{p,q})_K)\simeq \mathbb{Z}[\zeta_{pq}],
\end{align*}
and for any intermediate field $L\subseteq K$, the endomorphism ring $\End(\Jac(C_{p,q})_L)$ is isomorphic to 
\begin{align*}
\mathcal O_L=\mathcal O_K^{\Gal(K/L)}\subseteq \mathcal{O}_K\simeq \End(\Jac(C_{p,q})_K).
\end{align*}
\end{proposition}
\begin{proof}
The second half of the statement follows from the fact that isomorphism in the first half of the statement is Galois-equivariant.
\end{proof}

Let $a$ and $b$ be  integers satisfying 
\begin{align}
    1\leq &b\leq q-1 \label{eqn:brestrictions}\\
    0\leq a<p&\;\;\text{ and }\;\;a\leq k_b. \label{eqn:arestrictions}
\end{align}

The genus of  $C_{p,q}$  is $g={(p-1)(q-1)}/{2}$ and we take as a basis for the space of regular $1$-forms $\Omega^1(C_{p,q})$ the set
\begin{align}\label{def:basisordering}
    B=\left\{\omega_{a,b}=\frac{x^adx}{y^b}\mid b\text{ satisfies Equation \eqref{eqn:brestrictions}} , a\text{ satisfies Equation \eqref{eqn:arestrictions}} \right\}.
\end{align}  

We compute pullbacks of the differentials with respect to $\alpha$ in order to determine the associated endomorphism $\alpha$ of $\Jac(C_{p,q})$:
$$\alpha^*\left(\omega_{a,b}\right)=\zeta_{pq}^{q(a+1)-pb}\omega_{a,b}.$$
 We can now define the induced endomorphism $\alpha$ of $H_1(\Jac(C_{p,q})_\mathbb{C},\mathbb Q)$.
\begin{definition}\label{def:endomorphismalpha}
By taking the symplectic basis of $H_1(\Jac(C_{p,q})_\mathbb{C},\mathbb C)$ (with respect to the matrix $H=\diag(J,\ldots,J)$) corresponding to $B$ in Equation \eqref{def:basisordering},  $\alpha$ is the diagonal matrix whose $i^{th}$ diagonal $2\times2$  block is
$$\alpha[i]=Z^{q(a_i+1)-pb_i},$$
where $a_i=i-\kappa_{b_i-1}-1$ and $b_i$ is the largest integer satisfying Equation \eqref{eqn:brestrictions}   such that $a_i>0$. 
\end{definition}

\begin{example}\label{ex:q3p7a}
Consider the genus 6 curve  $C_{7,3}\colon y^3=x^7-1$. Using the strategy described above, we write the basis of $\Omega^1(C_{p,q})$ as $\{\omega_{0,1},\;\omega_{1,1},\;\omega_{0,2},\;\omega_{1,2},\;\omega_{2,2},\;\omega_{3,2}\}$ and the endomorphism $\alpha$ as
$$\alpha=\diag(Z^{q-p},Z^{2q-p},Z^{q-2p},Z^{2q-2p},Z^{3q-2p},Z^{4q-2p}).$$
\end{example}

\subsection{The actions of the Galois elements on $\alpha$}\label{sec:GaloisAction}
The main results of this subsection are Propositions \ref{prop:sigma_q} and \ref{prop:sigma_p}, where we describe the action of the Galois elements on the endomorphism $\alpha$. The following lemma will be referenced in the proofs of these propositions. 

\begin{lemma}\label{lemma:floorbounds}
Let $\beta$ be a positive integer satisfying Inequality \eqref{eqn:brestrictions}, and define two quantities $y_1=(p(q-\beta)-q+1)/q$ and $y_2=(p(q-\beta)-q-1)/q$. Then any integer $\lambda$  satisfying $\lambda<y_1$ also satisfies  $\lambda\leq\lfloor y_2\rfloor$.
\end{lemma}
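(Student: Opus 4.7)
The key observation is the tiny gap $y_1 - y_2 = 2/q$. Since $q \geq 3$ (odd prime), this gap is strictly less than $1$, which already forces $\lfloor y_1 \rfloor - \lfloor y_2 \rfloor \in \{0,1\}$. So the whole lemma reduces to handling the two cases and ruling out the ``bad'' configuration in which an integer $n$ separates $y_2$ from $y_1$.

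My plan is to split on whether $y_1$ is an integer. In the first case, $y_1 \in \mathbb{Z}$, the hypothesis $\lambda < y_1$ gives $\lambda \leq y_1 - 1$, and a direct estimate using $0 < 2/q < 1$ yields $\lfloor y_2 \rfloor = \lfloor y_1 - 2/q \rfloor = y_1 - 1$, so $\lambda \leq \lfloor y_2 \rfloor$ follows immediately. In the second case, $y_1 \notin \mathbb{Z}$, we have $\lambda \leq \lfloor y_1 \rfloor$, and so it suffices to prove $\lfloor y_1 \rfloor = \lfloor y_2 \rfloor$, i.e.\ that there is no integer in the interval $(y_2, y_1)$.

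The main (and really the only substantive) step is ruling out an integer in $(y_2, y_1)$. Multiplying through by $q$, such an integer $n$ would satisfy
\[
p(q-\beta) - q - 1 \; < \; qn \; < \; p(q-\beta) - q + 1,
\]
which forces $qn = p(q-\beta) - q$, equivalently $q(n+1) = p(q-\beta)$. Since $\gcd(p,q)=1$, this forces $q \mid (q-\beta)$. But Inequality \eqref{eqn:brestrictions} gives $1 \leq q-\beta \leq q-1$, so this divisibility is impossible, producing the desired contradiction.

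I don't expect any real obstacle here: the inequality $2/q < 1$ drives the whole argument, and the arithmetic step uses only $\gcd(p,q)=1$ together with the range $1 \leq \beta \leq q-1$ from \eqref{eqn:brestrictions}. The only mild care needed is to handle the boundary case $y_1 \in \mathbb{Z}$ separately, since there the hypothesis $\lambda < y_1$ is strict and the integer $y_1$ itself lies exactly on the edge.
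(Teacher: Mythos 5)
Your proposal is correct and follows essentially the same route as the paper's proof: split on whether $y_1\in\mathbb Z$, dispose of the integer case directly via $0<2/q<1$, and in the non-integer case show $\lfloor y_1\rfloor=\lfloor y_2\rfloor$ by ruling out an integer in $(y_2,y_1)$. Your divisibility step ($q(n+1)=p(q-\beta)$ forces $q\mid(q-\beta)$) is the same arithmetic fact the paper uses when it observes $y_1-1/q=p-1-p\beta/q\notin\mathbb Z$, just phrased multiplicatively rather than via the floor of $y_1-1/q$.
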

\begin{proof}
The quantities $y_1$ and $y_2$ differ by $2/q$: $y_2=y_1-2/q$. Let $\lambda$ be any integer less than $y_1$. We split the proof of the lemma into two cases: $y_1\in \mathbb{Z}$ and $y_1\not\in \mathbb{Z}$. 

First suppose that $y_1\in \mathbb Z$ so that $\lambda\leq y_1-1$. Since $0<2/q<1$, the inequalities $y_1-1 < y_1-2/q<y_1$ hold. Hence, $y_1-1=\lfloor  y_1-2/q\rfloor =\lfloor y_2\rfloor$, and so $\lambda\leq \lfloor y_2\rfloor$.

Suppose instead that $y_1\not\in \mathbb Z$. We claim that $y_1-1/q$ is also not an integer. Observe that
$$y_1-\frac1q = \frac{p(q-\beta)-q}{q}=p-1-\frac{p\beta}{q}.$$
Since both $p$ and $\beta$ are relatively prime to $q$, the fractional term $\frac{p\beta}{q}$ and, hence, $y_1-1/q$  are not integers. This implies that $y_1>\lfloor y_1-1/q\rfloor =\lfloor y_1-2/q\rfloor =\lfloor y_2 \rfloor$. Combining this with the requirement that $\lambda$ is an integer less than $y_1$ yields the desired result of $\lambda\leq \lfloor y_2\rfloor$.
\end{proof}

We will now give a complete description of the action of $\sigma_q$ on the endomorphism $\alpha$. We will do this by studying the action on an arbitrary diagonal block entry of the matrix. Using the action described in Equation \eqref{eqn:GaloisActions}, we see that
$${}^{\sigma_q}\alpha[i] = Z^{q(a_i+1)-pdb_i}.$$ 
A particularly nice property of this action is that ${}^{\sigma_q}\alpha[i]$ is either equal to or is the conjugate of some block diagonal entry of the matrix $\alpha$ (we denote the conjugate of $\alpha[j]$ by $\overline{\alpha[j]}$).

\begin{proposition}\label{prop:sigma_q}
Let $d$ be a generator of the cyclic group $\mathbb{Z}/q\mathbb{Z}^\times$ and let $t_i=\langle db_i \rangle_q$. Define the quantities $\iota_1=\kappa_{t_i-1}+a_i+1$ and $\iota_2 =\kappa_{t_i'-1}+p-(a_i+1)$, where $t_i'=q-t_i$. Then ${}^{\sigma_q}\alpha$ is the diagonal matrix whose $i^{th}$ diagonal $2\times2$ block is
$${}^{\sigma_q}\alpha[i]=\begin{cases}
\alpha[\iota_1]&\text{if } 0\leq a_i\leq k_{t_i}, \\
\overline{\alpha[\iota_2]}&\text{if } a_i> k_{t_i},\\
0&\text{otherwise.}  
\end{cases}$$
\end{proposition}

\begin{proof}
First suppose that $a_i$ satisfies $0\leq a_i\leq k_{t_i}$.
Then $t_i$ and $a_i$ together satisfy the bounds of Equations \eqref{eqn:brestrictions} and \eqref{eqn:arestrictions}. Hence, ${}^{\sigma_q}\alpha[i] = Z^{q(a_i+1)-pt_i} = \alpha[\iota],$
for some $\iota$. Comparing this to the definition of $\kappa$ and doing a bit of arithmetic shows that  $\iota=\iota_1$.

Suppose instead that $a_i$ satisfies
$$a_i\geq k_{t_i}+1>\frac{pt_i-q-1}{q}. $$
Note that this includes the case where $k_{t_i}<0$. Then  $t_i$ and $a_i$ do not satisfy the bounds of Equations \eqref{eqn:brestrictions} and \eqref{eqn:arestrictions}. Let $a_i'=p-a_i-2$ and $t_i'=q-t_i$. Then 
$$Z^{q(a_i+1)-pt_i} = \overline{Z^{q(a_i'+1)-pt_i'}}.$$
The bounds on $t_i$ imply that $t_i'$ satisfies $1\leq t_i'\leq q-1$ and the bounds on $a_i$ imply 
\begin{align*}
    a_i' &<p-\frac{pt_i-q-1}{q}-2\\
            &=\frac{p(q-t_i)-q+1}{q}\\
            &=\frac{pt_i'-q+1}{q}.
\end{align*}
By Lemma \ref{lemma:floorbounds}, we can conclude that  $a_i'\leq \lfloor(pt_i'-q-1)/{q} \rfloor=k_{t_i'}$. Thus, $a_i'$ and $t_i'$ together satisfy the bounds of  Inequalities \eqref{eqn:brestrictions} and \eqref{eqn:arestrictions}. Hence, $Z^{q(a_i'+1)-pt_i'}$ appears as a block entry of $\alpha$ and
$${}^{\sigma_q}\alpha[i] = \overline{Z^{q(a_i'+1)-pt_i'}}=\overline{ \alpha[\iota]}$$ 
for some $\iota$. Comparing this to the definition of $\kappa$  and working out a bit of arithmetic shows that  $\iota=\iota_2$.

\end{proof}


We can use a similar strategy to describe of the action of $\sigma_p$ on the endomorphism $\alpha$.  Using the action described in Equation \eqref{eqn:GaloisActions}, we see that
$${}^{\sigma_p}\alpha[i] = Z^{qc(a_i+1)-pb_i}.$$ This leads to the following proposition.  

\begin{proposition}\label{prop:sigma_p}
Let $c$ be a generator of the cyclic group $\mathbb{Z}/p\mathbb{Z}^\times$ and let $s_i=\langle c(a_i+1)\rangle_p-1$. Let $\iota_1=\kappa_{b_i-1}+s_i+1$ and $\iota_2 =\kappa_{b_i'-1}+p-(s_i+1)$, where $b_i'=q-b_i$. Then ${}^{\sigma_q}\alpha$ is the diagonal matrix whose $i^{th}$ diagonal $2\times2$ block is
$${}^{\sigma_p}\alpha[i]=\begin{cases}
\alpha[\iota_1]&\text{if } 0\leq s_i\leq k_{b_i}, \\
\overline{\alpha[\iota_2]}&\text{if } s_i> k_{b_i},\\
0&\text{otherwise.}  
\end{cases}$$
\end{proposition}

\begin{proof}

The bounds for $s_i$ are $0\leq s_i<p-1$, and $k_{b_i}\geq0$ since it is associated to a pair $a_i, b_i$ that appear in an entry of the endomorphism $\alpha$.  We follow the method of proof for Proposition \ref{prop:sigma_q} and split into two cases. 

First suppose that $s_i$ satisfies $0\leq s_i\leq k_{b_i}. $
Then $b_i$ and $s_i$ together satisfy the bounds of Equations \eqref{eqn:brestrictions} and \eqref{eqn:arestrictions}. Hence, ${}^{\sigma_p}\alpha[i] = Z^{q(s_i+1)-pb_i} = \alpha[\iota],$
for $\iota=\iota_1$.

Suppose instead that $s_i$ satisfies
$$s_i\geq k_{b_i}+1>\frac{pb_i-q-1}{q}. $$
Let $s_i'=p-s_i-2$ and $b_i'=q-b_i$. Then  $Z^{q(s_i+1)-pb_i} = \overline{Z^{q(s_i'+1)-pb_i'}}.$
The bounds on $s_i$ imply that
\begin{align*}
    s_i' &<p-\frac{pb_i-q-1}{q}-2\\
            &=\frac{p(q-b_i)-q+1}{q}\\
            &=\frac{pb_i'-q+1}{q}.
\end{align*}
By Lemma \ref{lemma:floorbounds}, this proves that $s_i'\leq \lfloor(pb_i'-q-1)/{q} \rfloor=k_{b_i'}$. Thus, $b_i'$ and $s_i'$ together satisfy the bounds of  Equations \eqref{eqn:brestrictions} and \eqref{eqn:arestrictions}. Hence, $Z^{q(s_i'+1)-pb_i'}$ appears as a block entry of $\alpha$ and
$${}^{\sigma_q}\alpha[i] = \overline{Z^{q(s_i'+1)-pb_i'}}$$ 
which is the conjugate of the block $\alpha[\iota]$,
for $\iota=\kappa_{b_i'-1}+s_i'+1= \kappa_{b_i'-1}+p-s_i-1=\iota_2$.

\end{proof}

\begin{example}\label{ex:q3p7b}
We return to the curve $C_{7,3}$ from Example \ref{ex:q3p7a}. The action of $\sigma_q$ (using $d=2$) on $\alpha$ yields the diagonal matrix
$${}^{\sigma_q}\alpha=\diag(Z^{q-2p},Z^{2q-2p},Z^{q-p},Z^{2q-p},\overline{Z^{4q-2p}},\overline{Z^{3q-2p}}).$$
The action of $\sigma_p$ (using $c=3$) on $\alpha$ yields 
$${}^{\sigma_p}\alpha=\diag(\overline{Z^{4q-2p}},\overline{Z^{q-2p}},{Z^{3q-2p}},\overline{Z^{q-p}},{Z^{2q-2p}},\overline{Z^{2q-p}}).$$
\end{example}

\section{The Sato-Tate Group of $\Jac(C_{p,q})$}\label{sec:SatoTateGroup} 

We begin with a definition and a lemma that we need in order to define the component group and to prove Theorem \ref{thm:STgroupqsmaller}.

\subsection{Preliminaries}

\begin{definition}\label{def:blocksignedperm}
A \textbf{signed permutation matrix} is a square matrix that has exactly one entry of 1 or $-1$ in each row and each column and 0s everywhere else. We will define a \textbf{block signed permutation matrix} to be an even dimension signed permutation matrix that is partitioned into $2\times2$ blocks that are either $I$, $J$, or the zero matrix.
\end{definition}

For any matrix partitioned $X$ into $2\times2$ blocks, we will let $X[i,j]$ be the block in the $i^{th}$ row partition and $j^{th}$ column partition.

The general strategy for writing down a matrix in the component group of the Sato-Tate group that is associated to a Galois element $\sigma \in \Gal(K/\mathbb Q)$  is described in the following lemma. 

\begin{lemma}\label{lemma:BuildGamma}
Let $A$ be a diagonal matrix partitioned into $2\times2$ blocks and let $B$ be a block signed permutation matrix. Then $BAB^{-1}$ is a diagonal matrix. Furthermore, if $B[i,j]=I$ then $BAB^{-1}[i,i]=A[j,j]$ and if $B[i,j]=J$ then $BAB^{-1}[i,i]=-JA[j,j]J$.
\end{lemma}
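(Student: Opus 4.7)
The plan is to reduce everything to a direct block computation, using the signed permutation structure to collapse most of the sum. First I would record that any signed permutation matrix is orthogonal, so $B^{-1}=B^{T}$; passing this through the block partition and using $I^{T}=I$, $J^{T}=-J$ shows $B^{-1}[l,k]=B[k,l]^{T}\in\{I,-J,0\}$. So the inverse of a block signed permutation matrix has the same shape with $J$ possibly replaced by $-J$.

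Next I would isolate the permutation data. Because $B$ is a signed permutation matrix, for each block-row index $i$ there is a unique block-column $j_{i}$ with $B[i,j_{i}]\in\{I,J\}$; likewise, for each block-column $k$ there is a unique $l_{k}$ with $B^{-1}[l_{k},k]\neq 0$, and $l_{k}=j_{k}$ because $B^{-1}[l,k]\neq 0$ iff $B[k,l]\neq 0$. Then I would expand
\[
(BAB^{-1})[i,k]=\sum_{m,l}B[i,m]\,A[m,l]\,B^{-1}[l,k].
\]
Since $A$ is diagonal, only $m=l$ contributes; the nonvanishing block-row index is $m=j_{i}$ and the nonvanishing block-column index is $l=j_{k}$, so the sum is zero unless $j_{i}=j_{k}$, which forces $i=k$ by injectivity of the permutation underlying $B$. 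This already gives block-diagonality of $BAB^{-1}$.

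For the diagonal blocks I would handle the two cases separately. If $B[i,j_{i}]=I$, then $B^{-1}[j_{i},i]=I^{T}=I$ and $(BAB^{-1})[i,i]=I\cdot A[j_{i},j_{i}]\cdot I=A[j_{i},j_{i}]$. If $B[i,j_{i}]=J$, then $B^{-1}[j_{i},i]=J^{T}=-J$ and $(BAB^{-1})[i,i]=J\cdot A[j_{i},j_{i}]\cdot(-J)=-J\,A[j_{i},j_{i}]\,J$, matching the stated formulas. Finally, to upgrade block-diagonal to genuinely diagonal as asserted, I would note the one-line $2\times 2$ computation
\[
-J\,\mathrm{diag}(a,b)\,J=\mathrm{diag}(b,a),
\]
so each diagonal block is itself diagonal regardless of which case occurs.

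There is no real obstacle here; the content is just careful bookkeeping. The only place where one must be alert is the sign $J^{T}=-J$, which is responsible for the $-J A[j,j] J$ (rather than $JA[j,j]J$) in the second case, and the observation that conjugation by $J$ swaps the two diagonal entries so the output really is diagonal.
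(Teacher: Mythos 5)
Your proof is correct and follows essentially the same route as the paper's: identify the single surviving term $B[i,j]\,A[j,j]\,B^{-1}[j,i]$ and then split into the $I$ and $J$ cases, with the sign coming from $J^{-1}=J^{T}=-J$. You spell out two points the paper leaves tacit — the permutation-index bookkeeping that kills the off-diagonal blocks, and the final observation that $-J\,\mathrm{diag}(a,b)\,J=\mathrm{diag}(b,a)$ so the output is genuinely diagonal rather than merely block-diagonal — but the underlying argument is the same.
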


\begin{proof}
The proof requires little more than elementary linear algebra. The first statement, that $BAB^{-1}$ is a diagonal matrix, comes from the fact that $A$ is diagonal and $B$ is a block signed permutation matrix.

Now note that the block entry $B^{-1}[j,i]$ is the inverse of the block $B[i,j]$. The $i^{th}$ diagonal block entry of $BAB^{-1}$ is the product
$$BAB^{-1}[i,i]= B[i,j]\cdot A[j,j] \cdot B^{-1}[j,i].$$
Thus, if $B[i,j]=I$ then the $i^{th}$ entry of $BAB^{-1}$ is $A[j,j]$. If, instead, $B[i,j]=J$ then $BAB^{-1}[i,i]=JA[j,j]J^{-1}=-JA[j,j]J.$
\end{proof}

\begin{remark}
As noted in the Introduction, this lemma can be applied to more varieties than those we study in this paper. For example, this lemma could be used for the varieties studied in \cite{EmoryGoodson2020, FiteGonzalezLario2016, LarioSomoza2018} and \cite{FS2016} (for certain values of $c$).
\end{remark}

\subsection{The identity component and the component group}\label{subsec:satotate}

We begin this subsection with a result about the identity component of the Sato-Tate group. 

\begin{proposition}\label{prop:idcomponent} 
The identity component of the Sato-Group of $\Jac(C_{p,q})$ is
$$\ST^0(\Jac(C_{p,q}))\simeq \U(1)^{g},$$
where $g=(p-1)(q-1)/2$ is the genus of $C_{p,q}$.
\end{proposition}

\begin{proof}
Recall the definition of the algebraic Sato-Tate group $\AST(\Jac(C_{p,q}))$ in Conjecture \ref{conjec:AST}. If we consider the same construction but restrict the domain of the $\ell$-adic representation $\rho_{A,\ell}$ to $\Gal(\overline{ \mathbb Q}/K)$, where $K$ is the CM field of $\Jac(C_{p,q})$, we obtain the identity component of $\AST(\Jac(C_{p,q}))$. The identity component of the Sato-Tate group of $\Jac(C_{p,q})$ is a maximal compact subgroup of this.

This restricted map is exactly the map the appears in Theorem A of \cite{Banaszak2003}, and so we can apply that result  to our situation. The theorem states that the image of the restricted map is contained in the group of diagonal matrices of the form
$$\{\diag( x_1,y_1,\dots,x_{g},y_{g})\mid   x_i,y_i\in\mathbb{Q}_{\ell}^{\times}, x_1y_1=\cdots=x_{g}y_{g}=1\}.$$

The Jacobian of $C_{p,q}$ is absolutely simple and nondegenerate (see Proposition \ref{prop:CatalanNondegnerate}), and so, as in \cite{EmoryGoodson2020,FiteGonzalezLario2016}, we can go one step further and conclude that the containment is actually an equality. It follows that we can choose $\ST^0(\Jac(C_{p,q}))$ to be the maximal compact subgroup $\U(1)^g$.

\end{proof}

We now define two matrices. We will then prove that they form a generating set for the component group of the Sato-Tate group.  For the following definitions, let $g$ be the genus of $C_{p,q}$. 

\begin{definition}\label{def:gamma_q}
Let $d$ be a generator of the cyclic group $\mathbb{Z}/q\mathbb{Z}^\times$. For any integer $i$ satisfying $1\leq i \leq g$, let $t_i=\langle db_i \rangle_q$ and $r_i=i-\kappa_{b_i-1}-1$, where $b_i$ is the largest positive integer such that $r_i$ is nonnegative.  Define $\gamma_q$ to be the $2g\times2g$ block signed permutation matrix whose $ij^{th}$ block is
$$\gamma_q[i,j]=\begin{cases}
I&\text{if } j=\kappa_{t_i-1}+r_i+1 \text{ and } 0\leq r_i\leq k_{t_i}, \\
J&\text{if } j=\kappa_{t_i'-1}+p-(r_i+1), \text{ and } r_i> k_{t_i},\\
0&\text{otherwise,}  
\end{cases}$$
where $t_i'=q-t_i$.
\end{definition}

\begin{definition}\label{def:gamma_p}
Let $c$ be a generator of the cyclic group $\mathbb{Z}/p\mathbb{Z}^\times$, $r_i$ and $b_i$ be as defined in Definition \ref{def:gamma_q}, and $s_i=\langle cr_i \rangle_p-1$. Define $\gamma_p$ to be the  $2g\times2g$  block signed permutation matrix whose $ij^{th}$ block is 
$$\gamma_p[i,j]=\begin{cases}
I&\text{if } j=\kappa_{b_i-1}+s_i+1 \text{ and } 0\leq s_i\leq k_{b_i},\\ 
J&\text{if } j=\kappa_{b'_i-1}+p-(s_i+1) \text{ and } s_i> k_{b_i},\\ 
0&\text{otherwise,}  
\end{cases}$$
where $b_i'=q-b_i$.
\end{definition}

\begin{theorem}\label{thm:STgroupqsmaller}
Let $q\not=p$ be odd primes and $\gamma_q,\gamma_p$ be as defined in Definitions \ref{def:gamma_q} and \ref{def:gamma_p}.  The Sato-Tate group of the Jacobian of the genus $g$ curve $C_{p,q}$ is 
$$\ST(\Jac(C_{p,q}))\simeq \langle\\U(1)^g, \gamma_q,\gamma_p\rangle.$$
\end{theorem}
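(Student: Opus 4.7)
The plan is to combine the identification $\AST(\Jac(C_{p,q})) = \TL(\Jac(C_{p,q}))$ from Proposition \ref{prop:AST=TL} with the identity component calculation of Proposition \ref{prop:idcomponent}. Because the algebraic Sato-Tate conjecture for $\Jac(C_{p,q})$ is explained by endomorphisms, the component group $\ST(\Jac(C_{p,q}))/\ST^0(\Jac(C_{p,q}))$ is isomorphic to $\Gal(K/\Q)\cong \Z/(p-1)\Z\times \Z/(q-1)\Z$, generated by $\sigma_p$ and $\sigma_q$ (see the discussion following Proposition \ref{prop:AST=TL}). It therefore suffices to exhibit two matrices $\gamma_p,\gamma_q\in\USp(2g)$ which realize the Galois action on the endomorphism $\alpha$ of Definition \ref{def:endomorphismalpha}, i.e.\ such that $\gamma_\ast\,\alpha\,\gamma_\ast^{-1}={}^{\sigma_\ast}\alpha$ for $\ast\in\{p,q\}$, and then to argue that $\langle \U(1)^g,\gamma_p,\gamma_q\rangle$ realises the full component group.

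First I would verify the conjugation identities for the $\gamma_\ast$ of Definitions \ref{def:gamma_q} and \ref{def:gamma_p}. By construction each $\gamma_\ast$ is a block signed permutation matrix in the sense of Definition \ref{def:blocksignedperm}, so Lemma \ref{lemma:BuildGamma} applies: if $\gamma_\ast[i,j]=I$ the $i$th diagonal block of $\gamma_\ast\alpha\gamma_\ast^{-1}$ is $\alpha[j]$, while if $\gamma_\ast[i,j]=J$ it is $-J\alpha[j]J=\overline{\alpha[j]}$ (using $Z=\diag(\zeta_{pq},\overline{\zeta_{pq}})$ and the direct calculation $-JZJ=\overline Z$). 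A case-by-case comparison of Definition \ref{def:gamma_q} with Proposition \ref{prop:sigma_q} shows that the indices $\iota_1=\kappa_{t_i-1}+r_i+1$ and $\iota_2=\kappa_{t_i'-1}+p-(r_i+1)$ are exactly the ones produced in the two cases of the proposition, so $\gamma_q\alpha\gamma_q^{-1}={}^{\sigma_q}\alpha$; the identical argument applied to Definition \ref{def:gamma_p} and Proposition \ref{prop:sigma_p} gives $\gamma_p\alpha\gamma_p^{-1}={}^{\sigma_p}\alpha$.

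Second, I would check that $\gamma_p,\gamma_q\in\USp(2g)$. Both matrices have entries in $\{0,\pm 1\}$, so unitarity reduces to the permutation property: each row and each column must contain exactly one nonzero $2\times 2$ block. Since the allowed nonzero blocks $I$ and $J$ each satisfy $X^{T}JX=J$, once the permutation property is in hand the preservation of $H=\diag(J,\ldots,J)$ is immediate from a block computation. The symplectic condition then yields $\gamma_p,\gamma_q\in\USp(2g)$, and membership in $\ST(\Jac(C_{p,q}))\subseteq\USp(2g)$ follows because these matrices realise genuine Galois conjugations on $\End(\Jac(C_{p,q})_{\overline{\Q}})_\Q$ and thus lie in $\TL(\Jac(C_{p,q}))$.

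Finally, since $\U(1)^g=\ST^0(\Jac(C_{p,q}))$ by Proposition \ref{prop:idcomponent} and the two cosets $\gamma_p\U(1)^g$ and $\gamma_q\U(1)^g$ map to the generators $\sigma_p,\sigma_q$ of $\Gal(K/\Q)$, the group $\langle \U(1)^g,\gamma_p,\gamma_q\rangle$ surjects onto the component group. Combined with the containment in $\ST(\Jac(C_{p,q}))$, this gives the claimed equality.

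The main obstacle is not conceptual but combinatorial: one must check that $\gamma_p$ and $\gamma_q$ really are block signed permutation matrices, i.e.\ that the assignment $i\mapsto \iota(i)$ coming from the two cases of each definition is a bijection on $\{1,\ldots,g\}$. This reduces to the fact that $b\mapsto \langle db\rangle_q$ and $a\mapsto \langle c(a+1)\rangle_p-1$ are permutations modulo $q$ and $p$ respectively, together with Lemma \ref{lemma:floorbounds} (already used in the proofs of Propositions \ref{prop:sigma_q} and \ref{prop:sigma_p}), which guarantees that the two cases $r_i\le k_{t_i}$ vs.\ $r_i>k_{t_i}$ partition the index set cleanly and that the ``conjugate'' branch lands in a valid $(a,b)$-pair. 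Carrying out this bookkeeping carefully is the one place where the argument requires genuine attention.
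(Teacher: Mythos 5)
Your proposal is correct and follows essentially the same route as the paper: use Proposition \ref{prop:AST=TL} to reduce to the twisted Lefschetz group, invoke Proposition \ref{prop:idcomponent} for the identity component, and then compare Definitions \ref{def:gamma_q}--\ref{def:gamma_p} with Propositions \ref{prop:sigma_q}--\ref{prop:sigma_p} via Lemma \ref{lemma:BuildGamma} and the identity $-JZJ=\overline{Z}$. You make a couple of steps explicit that the paper leaves implicit (that $\gamma_p,\gamma_q$ are genuinely symplectic and that the index map $i\mapsto\iota(i)$ is a bijection), which is sound bookkeeping but not a different argument.
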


\begin{proof}
We proved in Proposition \ref{prop:idcomponent} that the identity component $\ST^0(\Jac(C_{p,q}))$ is $\U(1)^g$. We will now prove the claim regarding the component group.

It follows from Proposition \ref{prop:AST=TL} that we can use the twisted Lefschetz group of $\Jac(C_{p,q})$ to write down explicit generators of the component group. Choose generators $c$ and $d$ of $\mathbb{Z}/p\mathbb{Z}^\times$ and $\mathbb{Z}/q\mathbb{Z}^\times$, respectively.  Propositions \ref{prop:sigma_q} and \ref{prop:sigma_p} show that the actions of the Galois elements simply rearrange and sometimes conjugate the diagonal entries of the endomorphism $\alpha$. Since $-JZJ=\overline{Z}$, where $Z$ is defined in Equation \eqref{eqn:Zmatrix},  we can apply Lemma \ref{lemma:BuildGamma} to determine the component group generators.

From here, we simply compare Definition \ref{def:gamma_q} to Proposition \ref{prop:sigma_q} and Definition \ref{def:gamma_p} to Proposition \ref{prop:sigma_p}. For the former, $r_i=a_i$ and so the definition of $\gamma_q$ yields the correct rearranging and conjugating as determined by the action of $\sigma_q$. For the latter, we used the same notation and so it is clear that the definition of $\gamma_p$ yields the rearranging and conjugating determined by the action of $\sigma_p$. Thus, $\gamma_q$ and $\gamma_p$, when taken with the identity component, are generators of $\TL(\Jac(C_{p,q}))$.
\end{proof}

The description of $\gamma_q$ simplifies rather nicely in the case where $q=3$, and so we present the following corollary.

\begin{corollary}
Let $q=3$, $p>3$ be  prime, and $m=\lfloor \frac{p-4}{3}\rfloor+1$. Then the component group generator $\gamma_q$ is 

$$\gamma_q= \left(\begin{array}{c|c|c}
     &I_{2m}&  \\ 
     \hline
   I_{2m}  & &\\
   \hline
   &&J_{g-2m}
\end{array} \right),$$
where $I_{2m}$ is the $2m\times2m$ identity matrix and $J_{g-2m}=\antidiag(\underbrace{J,J,\ldots, J}_{g-2m})$. 
\end{corollary}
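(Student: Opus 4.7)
The plan is to specialize Definition \ref{def:gamma_q} to $q=3$ with the choice of generator $d=2$ for $(\Z/3\Z)^\times$, and then read off every nonzero block $\gamma_q[i,j]$ by a case analysis on $b_i$.

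First I would record the basic numerology. Only $b\in\{1,2\}$ occurs, and $k_1=\lfloor(p-4)/3\rfloor=m-1$, while $k_2=\lfloor(2p-4)/3\rfloor\ge k_1$. Consequently $\kappa_1=m$ and $\kappa_2=m+(k_2+1)=g=p-1$. The chosen ordering of the symplectic basis then partitions $\{1,\ldots,g\}$ into two ranges: indices $i\in\{1,\ldots,m\}$ have $b_i=1$ and $r_i=i-1\in\{0,\ldots,m-1\}$, while indices $i\in\{m+1,\ldots,g\}$ have $b_i=2$ and $r_i=i-m-1\in\{0,\ldots,k_2\}$.

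Next I would evaluate $t_i=\langle 2b_i\rangle_3$. For $b_i=1$ one has $t_i=2$ and $k_{t_i}=k_2\ge k_1\ge r_i$, so the first branch of Definition \ref{def:gamma_q} always applies, giving $\gamma_q[i,\kappa_1+r_i+1]=\gamma_q[i,m+i]=I$ for $i=1,\ldots,m$. These entries assemble into the top-right $I_{2m}$ block. For $b_i=2$ one has $t_i=1$ and $k_{t_i}=m-1$, and I would split on the size of $r_i$. When $r_i\le m-1$ (equivalently $m+1\le i\le 2m$) the first branch gives $\gamma_q[i,r_i+1]=\gamma_q[i,i-m]=I$, producing the bottom-left $I_{2m}$ block. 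When $r_i\ge m$ (equivalently $2m+1\le i\le g$) the second branch applies with $t_i'=2$ and yields $\gamma_q[i,\kappa_1+p-(r_i+1)]=\gamma_q[i,2m+p-i]=J$. Since $g=p-1$, as $i$ runs over $\{2m+1,\ldots,g\}$ the column index $2m+p-i$ sweeps the same set in reverse order, so these $J$ blocks fill out $\antidiag(J,\ldots,J)$ in the bottom-right $(g-2m)\times(g-2m)$ corner.

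The main obstacle is essentially clerical: the definition of $\gamma_q$ involves several moving pieces ($t_i$, $t_i'$, $\kappa$, and the cutoff $r_i\le k_{t_i}$), and one must check that the three subcases above partition $\{1,\ldots,g\}$ disjointly and that their image columns match the claimed block structure. Beyond this index bookkeeping no additional ideas are required: once every nonzero $(i,j)$ entry has been identified and compared against Definition \ref{def:gamma_q}, the stated block form follows immediately.
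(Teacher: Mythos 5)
Your proposal is correct and follows essentially the same route as the paper's own proof: you specialize Definition~\ref{def:gamma_q} with $d=2$, split the row indices according to $b_i\in\{1,2\}$, and within $b_i=2$ split further on whether $r_i\le m-1$ or $r_i\ge m$, which produces precisely the three blocks (upper $I_{2m}$, middle $I_{2m}$, lower anti-diagonal $J_{g-2m}$) that the paper identifies. The index bookkeeping — $\kappa_1=m$, $k_1=m-1$, $t_i\in\{2,1\}$, and the column formulas $m+i$, $i-m$, $2m+p-i$ — matches the paper's computation exactly, so no new ideas or alternative decomposition are involved.
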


\begin{proof}
If $q=3$, then the genus is $g=p-1$ and the values of $b$ satisfying Equation \eqref{eqn:brestrictions} are $b_1=1$ and $b_2=2$. We choose $d=2$ as the generator of $\mathbb{Z}/q\mathbb{Z}^\times$, which leads to $t_1=2$ and $t_2=1$. Furthermore, $k_1=m-1$, $\kappa_1=m$, and $k_2=\lfloor (2p-4)/3\rfloor$.

The values of $i$ associated to $b_1$ are $1\leq i\leq m$ (so that $r_i\geq 0$). For these values of $i$, $0\leq r_i\leq$ is always satisfied. Hence, $\gamma_q[i,j]=I$ when 
$$j=\kappa_1 +r_i+1=m+i.$$
This verifies that the $I_{2m}$ in the top rows of $\gamma_q$ in the statement of the corollary are correct.

The values of $i$ associated to $b_2$ are then $m+1\leq i\leq g=p-1$. The associated values are $r_i$ are split into to intervals: $0\leq r_i\leq m-1$ and $r_i\geq m$. In the first interval we have $m+1\leq i\leq 2m$, which leads to $\gamma_q[i,j]=I$ when $j=i-m$. This verifies that the $I_{2m}$ in the middle rows of $\gamma_q$ in the statement of the corollary are correct.

In the second interval, $r_i\geq m$ and we need to determine which values of $j$ will lead to $\gamma_q[i,j]=J$. Let $i=2m+h$, where $1\leq h\leq g-2m$, so that $r_i=m+h-1$. Then the value of $j$ we need is
$$j=m+p-(m+h)=p-h= g-(h-1).$$
Thus, the $J$ blocks will start in row $i=2m+1$, column $j=g$, and will then cascade down and to the left. This verifies that the $J_{g-2m}$ in the bottom rows of $\gamma_q$ in the statement of the corollary are correct.
\end{proof}
\begin{remark}
See the Appendix for examples of the component group generators.
\end{remark}

\section{Moment Statistics}\label{sec:moments}
In this section, we describe the distributions of the coefficients of the characteristic polynomial of  random conjugacy classes in the Sato-Tate groups in Theorem \ref{thm:STgroupqsmaller}. For comparison, we compute the numerical $a_1$-moments of the normalized L-polynomial for some small genus Catalan curves.

\subsection{Preliminaries}\label{sec:MomentsIntro}
The following background information has been adapted from \cite[Section 6]{EmoryGoodson2020}. 

We start by recalling some basic properties of moment statistics. We define the $n^{th}$ moment (centered at 0) of a probability density function to be the expected value of the $n$th power of the values, i.e. $M_n[X]=E[X^n]$. Recall  that for independent variables $X$ and $Y$ we have $E[X+Y]=E[X]+E[Y]$ and $E[XY]=E[X]E[Y]$ (see, for example, \cite{LarioSomoza2018}). This yields the following identity 
\begin{align*}
    M_n[X_1+\cdots + X_m]=\sum_{\beta_1+\cdots+\beta_m=n} \binom{n}{\beta_1,\ldots, \beta_m}M_{\beta_1}[X_1]\cdots M_{\beta_m}[X_m].
\end{align*}
Furthermore, for any constant $b$, we have $M_n[b]=b^n$.

We start with the unitary group $\U(1)$ and consider the trace map  on $U\in \U(1)$ defined by $z:=\tr(U)=u+\overline{u}$. 
This trace map takes values in $[-2,2]$. From here we see that 
$$\mu_{\U(1)}= \frac1\pi \frac{dz}{\sqrt{4-z^2}}$$
gives a uniform measure of $\U(1)$ 
(see \cite[Section 2]{SutherlandNotes}). We can deduce the following pushforward measure
\begin{equation*}\label{eqn:muU1}
    \mu_{\U(1)^n}= \prod_{i=1}^n \frac1\pi \frac{dz_i}{\sqrt{4-z_i^2}}.
\end{equation*} 

We can now define the moment sequence $M[\mu]$, where $\mu$ is a positive measure on some interval $I=[-d,d]$. The $n^{th}$ moment $M_n[\mu]$  is, by definition,   $\mu(\phi_n)$, where $\phi_n$ is the function $z\mapsto z^n$. 
This yields $M_n[\mu_{\U(1)}]  = \binom{n}{n/2}$, where $\binom{n}{n/2}=0$ if $n$ is odd. Hence, $M[\mu_{\U(1)}] = (1,0,2, 0,6,0,20,0,\ldots).$ From here, we  take binomial convolutions to obtain
\begin{align}
    M_n[\mu_{\U(1)^g}] &=\sum_{\beta_1+\cdots+\beta_g=n} \binom{n}{\beta_1,\ldots, \beta_m}M_{\beta_1}[\mu_{\U(1)}]\cdots M_{\beta_g}[\mu_{\U(1)}].\label{eqn:MomentsU1g}
\end{align}

In what follows, for each $i\in\{1,2, \ldots, g\}$, denote by $\mu_i$ the projection of the Haar measure onto the interval $\left[-\binom{2g}{i},\binom{2g}{i}\right].$ We can compute $M_n[\mu_i]$ by averaging over the components of the Sato-Tate group. 

\subsection{Characteristic polynomials}\label{subsec:charpolys}

In this subsection, we give results for the characteristic polynomials in each component of the Sato-Tate group. Let $U$ be a random matrix in the identity component of $\ST(\Jac(C_{p,q}))$ and  let $A_{m,n}=\gamma_p^m\gamma_q^n$ be a matrix in the component group $\ST(\Jac(C_{p,q}))/\ST^0(\Jac(C_{p,q}))$. The characteristic polynomial of $U\cdot A_{m,n}$ is a degree $2g$ palindromic polynomial of the form
$$P_{m,n}(T)=T^{2g}+b_1T^{2g-1}+b_2T^{2g-2}+\cdots +b_2T^2+b_1T+1.$$ 

We now present two propositions regarding the coefficients of the characteristic polynomials.

\begin{proposition}\label{prop:charpolya1}
Only matrices in the identity component $\U(1)^g$  in $\ST(\Jac(C_{p,q})$ have a characteristic polynomial with  nonzero  $b_1$-coefficient.
\end{proposition}

\begin{proof}
The $b_1$-coefficient of the characteristic polynomial of $P_{m,n}$ is the trace of the matrix product $U\cdot A_{m,n}$. We will show that this trace is nonzero if and only if $m\equiv 0 \pmod{p-1}$ and $n\equiv 0 \pmod{q-1}$. 

To do this it is sufficient to show that $A_{m,n}$ has no nonzero diagonal entries. If  $A_{m,n}$ has a nonzero diagonal entry then, for some $j$, the $j^{th}$ diagonal block entry of $A_{m,n}$ is $I$ and $\alpha$ satisfies
\begin{equation}\label{eqn:powersofactions}
    {}^{(\sigma_p)^m\circ(\sigma_q)^n}\alpha[j]=\alpha[j].
\end{equation}
To determine if and when this is possible, we consider the action of the corresponding Galois element $(\sigma_p)^m\circ(\sigma_q)^n$. Let $\alpha[j]=Z^{q(a+1)-pb}$. Equation \eqref{eqn:powersofactions} holds if and only if  the following is true:
$$\zeta_{pq}^{q(a+1)-pb}={}^{(\sigma_p)^m\circ(\sigma_q)^n}\left(\zeta_{pq}^{q(a+1)-pb}\right)=\zeta_{pq}^{qc^m(a+1)-pd^nb}.$$
This equality implies that $$\zeta_{p}^{(a+1)}\overline\zeta_{q}^{b}=\zeta_{p}^{c^m(a+1)}\overline\zeta_{q}^{d^nb}.$$
Since $p$ and $q$ are relatively prime, we can conclude from here that 
$$\zeta_{p}^{(a+1)}=\zeta_{p}^{c^m(a+1)}={}^{(\sigma_p)^m}\zeta_{p}^{(a+1)}\;\;\text{ and }\;\; \zeta_{q}^{b}=\zeta_{q}^{d^nb}={}^{(\sigma_q)^n}\zeta_{q}^{b}.$$
Since $\sigma_p$ and $\sigma_q$ are generators of $\Gal(K/\mathbb{Q})$ and have orders $p-1$ and $q-1$, respectively, the above equalities hold if and only if $m\equiv 0 \pmod{p-1}$ and $n\equiv 0 \pmod{q-1}$. 
\end{proof}

One benefit of this result is that, in order to compute the $\mu_1$-moment statistics of the Sato-Tate group, we need only compute moments for a random matrix $U$ in the identity component of $\ST(\Jac(C_{p,q}))$. To average over the components we simply divide this value by the size of the Galois group.

\begin{proposition}\label{prop:pcharpolygammag}
Let $m={(p-1)/2}$ and $n={(q-1)/2}$. Then the characteristic polynomial of $U\cdot A_{m,n}$ in the Sato-Tate group is $$P_{m,n}(T)=(T^2+1)^g$$
for any $U$ in the identity component.
\end{proposition}
\begin{proof}
We will show that the matrix $A_{m,n}$ for these values of $m$ and $n$ has $J$ or $-J$ as each of its $g$ diagonal blocks. Since the characteristic polynomial of $\pm J$ is $T^2+1$, this will yield the desired result.

The actions and orders of the Galois elements $\sigma_p$ and $\sigma_q$ tell us that $\sigma_p^{m}(\zeta_p)=\overline \zeta_p $ and $\sigma_q^{n}(\zeta_q)=\overline \zeta_q $. Thus, 
\begin{align*}
    \sigma_p^{m}\circ\sigma_q^{n}(\zeta_{pq}^{q(a+1)-pb})&=\sigma_p^{m}\circ\sigma_q^{n}(\zeta_{p}^{(a+1)}\overline\zeta_{q}^{b})\\
    &=\overline\zeta_{p}^{(a+1)}\zeta_{q}^{b}\\
    &=\overline{\zeta_{pq}^{q(a+1)-pb}}.
\end{align*}
Thus, every diagonal entry of $\alpha$ satisfies
$${}^{\sigma_p^{m}\circ\sigma_q^{n}}\alpha[j]=\overline{\alpha[j]},$$
which means that the block diagonal entries of the corresponding matrix are all either $J$ or $-J$.
\end{proof}

\begin{example}
We determine moment statistics for the genus 4 curve $C_{5,3}:\; y^3=x^{5}-1$ over each subfield of the CM field $K=\mathbb{Q}(\zeta_{15})$. Using characteristic polynomials, we can compute the $n$th moments for each $\mu_i$, $1\leq i\leq4$. There is a table  corresponding to each of the four subfields: $\mathbb{Q}$ (Table \ref{table:moments35Q}), $\mathbb{Q}(\zeta_3)$ (Table \ref{table:moments35Q3}), $\mathbb{Q}(\zeta_5)$ (Table \ref{table:moments35Q5}), and $\mathbb{Q}(\zeta_{15})$ (Table \ref{table:moments35Q15}).  These moments were computed using Sage \cite{Sage}.  
\begin{table}[h]
{\renewcommand{\arraystretch}{1.5}
\begin{tabular}{|c|l|}
\hline
$M[\mu_1]$& $(1, 0, 1, 0, 21, 0, 640, 0, 23765, \ldots)$\\
\hline
$M[\mu_2]$&$(1, 1, 8, 76, 1168, 20956, 414284, 8643328, 187416464, \ldots)$\\
\hline
$M[\mu_3]$&$(1, 0, 13, 0, 11745, 0, 17177080, 0, 31036079585, \ldots)$\\
\hline
$M[\mu_4]$&$(1, 2, 27, 476, 18391, 689812, 34599990, 1677458008, 91894386279,  \ldots)$\\
\hline
\end{tabular}}
\caption{Moment Statistics for $y^3=x^{5}-1$ over $\mathbb{Q}$.}\label{table:moments35Q}
\end{table}

\begin{table}[h]
{\renewcommand{\arraystretch}{1.5}
\begin{tabular}{|c|l|}
\hline
$M[\mu_1]$& $(1, 0, 2, 0, 42, 0, 1280, 0, 47530, \ldots)$\\
\hline
$M[\mu_2]$&$(1, 1, 11, 136, 2263, 41656, 827444, 17282560, 374815319, \ldots)$\\
\hline
$M[\mu_3]$&$(1, 0, 26, 0, 23490, 0, 34354160, 0, 62072159170, \ldots)$\\
\hline
$M[\mu_4]$&$(1, 2, 42, 890, 36418, 1377502, 69187410, 3354841408, 183788328258,  \ldots)$\\
\hline
\end{tabular}}
\caption{Moment Statistics for $y^3=x^{5}-1$ over $\mathbb{Q}(\zeta_3)$.}\label{table:moments35Q3}
\end{table}

\begin{table}[h]
{\renewcommand{\arraystretch}{1.5}
\begin{tabular}{|c|l|}
\hline
$M[\mu_1]$& $(1, 0, 4, 0, 84, 0, 2560, 0, 95060, \ldots)$\\
\hline
$M[\mu_2]$&$(1, 2, 22, 272, 4526, 83312, 1654888, 34565120, 749630638, \ldots)$\\
\hline
$M[\mu_3]$&$(1, 0, 52, 0, 46980, 0, 68708320, 0, 124144318340, \ldots)$\\
\hline
$M[\mu_4]$&$(1, 4, 82, 1780, 72830, 2755004, 138374800, 6709682816, 367576656446,  \ldots)$\\
\hline
\end{tabular}}
\caption{Moment Statistics for $y^3=x^{5}-1$ over $\mathbb{Q}(\zeta_5)$.}\label{table:moments35Q5}
\end{table}

\begin{table}[h]
{\renewcommand{\arraystretch}{1.5}
\begin{tabular}{|c|l|}
\hline
$M[\mu_1]$& $(1, 0, 8, 0, 168, 0, 5120, 0, 190120, \ldots)$\\
\hline
$M[\mu_2]$&$(1, 4, 40, 544, 9016, 166624, 3309376, 69130240, 1499256376, \ldots)$\\
\hline
$M[\mu_3]$&$(1, 0, 104, 0, 93960, 0, 137416640, 0, 248288636680, \ldots)$\\
\hline
$M[\mu_4]$&$(1, 6, 156, 3528, 145512, 5509296, 276746016, 13419347136, 735153215448,  \ldots)$\\
\hline
\end{tabular}}
\caption{Moment Statistics for $y^3=x^{5}-1$ over $\mathbb{Q}(\zeta_{15})$.}\label{table:moments35Q15}
\end{table}

\end{example}

\subsection{Tables of $\mu_1$- and $a_1$-moment statistics}\label{sec:a1mu1Moments}

Table \ref{table:momemntsa1} gives both $M[\mu_1]$ and the numerical  moments  of the normalized $L$-polynomial of some Catalan Jacobians (over $\mathbb{Q}$). The numerical moments were computed for primes up  to $2^{N}$, where $N$ is given in the last column of the table, using an algorithm described in \cite{HarveySuth2014} and \cite{HarveySuth2016}. As the genus grows, the computations require more processing power, and so a smaller bound $N$ was used. This leads to less accurate estimates for the numerical moments in higher genus, which make it more difficult to compare them to the theoretical moments obtained from the Sato-Tate groups. 

In Proposition \ref{prop:charpolya1} we proved that only matrices in the identity component have characteristic polynomials with nonzero $b_1$-coefficient. Let $M_n[{}^0\mu_1]$ denote the $n^{th}$ moment of the identity component. As noted in the proof of Proposition \ref{prop:charpolya1}, the $b_1$-coefficient of the characteristic polynomial is the trace of a random matrix $U\in\U(1)^g$. Thus, $M_n[{}^0\mu_1]=M_n[\mu_{\U(1)^g}]$ and we can use the formula in Equation \eqref{eqn:MomentsU1g}. To compute $M_n[\mu_1]$, we average over the components by dividing $M_n[\mu_{\U(1)^g}]$ by the size of the Galois group.

\begin{center}
\begin{table}[h]
\begin{tabular}{|c|c|l|l|l|l|l|l|}
\hline
$(p,q)$ & $g$& & $M_2$ & $M_4$ & $M_6$ & $M_8$ &N\\ 
\hline
(5,3)&4& $\mu_1$ & 1&21&640&23765 &  \\
&&  $a_1$  &    0.994  &  20.726  & 625.606  &  22961.668  & 23\\
\hline
(7,3)&6& $\mu_1$ &  1&33&1660&106785  &  \\
&&  $a_1$ & 0.998  &  32.718  &  1628.656 & 103534.931  & 23  \\
\hline
(11,3)&10& $\mu_1$ &  1 & 57& 5140&615545 &  
 \\
&&   $a_1$ &  0.972  &  55.306 & 4836.457  &  532489.325 & 19\\
\hline
(13,3)&12& $\mu_1$ &1&69&7600& 1121925 &   \\
&&   $a_1$ &  0.998 &  68.164  &  7570.201  & 1178539.197 &19   \\
\hline
(7,5)&12& $\mu_1$ &1&69&7600& 1121925 &   \\
&&   $a_1$ &  0.998 &  68.164  &  7570.201  & 1178539.197 &19   \\
\hline
(17,3)&16& $\mu_1$ &1&93& 13960&2840285 &   \\
&&   $a_1$&   0.954  &  91.335  &  15348.411 & 3864179.550  &  18   \\
\hline
\end{tabular}
\caption{Table of some $\mu_1$- and $a_1$-moments $\Jac(C_{p,q})$ over $\mathbb Q$.}\label{table:momemntsa1}
\end{table}
\end{center}

Note that $M_2[\mu_1]$ equals 1 for each example in Table \ref{table:momemntsa1}. We prove that this is true for every Catalan Jacobian in Proposition \ref{prop:secondmoment}.

\begin{remark}
In many cases, the $\mu_1$-moments of Catalan Jacobians match those of the Fermat quotients considered in \cite{FiteGonzalezLario2016}. This occurs when the genera are the same and the identity components are the same (some of the identity components in \cite{FiteGonzalezLario2016} are of the form $(\U(1)_{g/3})^{3}$), where $\U(1)_n=\left\langle \diag(\underbrace{u, \overline u, \ldots, u,\overline u}_{n-\text{times}}): u\in \mathbb C^\times, |u|=1\right\rangle$. These Jacobian varieties have a similar property to ours: only the identity components of the Sato-Tate groups contribute to the $b_1$-coefficients of the characteristic polynomials. However we expect to see a difference in the moments of the higher traces. 

For example, consider the two genus 6 curves $y^3=x^7-1$ (Catalan curve) and $v^{13}=u(u+1)^{10}$ (a curve from \cite{FiteGonzalezLario2016}) over $\mathbb{Q}$. Both of their associated Sato-Tate groups have identity component $\U(1)^6$, but their component groups are different. We can see this difference in the $\mu_2$-moments:
\begin{align*}
    y^3=x^7-1\colon &\;\;M[\mu_2]= (1,1,12, 206, 5796,\ldots),\\
    v^{13}=u(u+1)^{10}\colon &\;\;M[\mu_2]=(1, 1, 11, 206, 5781,\ldots). 
\end{align*}
\end{remark}

\subsection{General results for moment statistics}

\begin{proposition}\label{prop:secondmoment}
Over $\mathbb{Q}$, the second moment $M_2[\mu_1]$ equals 1 for every Catalan Jacobian. Over $\mathbb{Q}(\zeta_{p}),\mathbb{Q}(\zeta_{q})$, and $\mathbb{Q}(\zeta_{pq})$, the moment $M_2[\mu_1]$ is $p-1$, $q-1$, and $(p-1)(q-1)$, respectively.
\end{proposition}

\begin{proof}
In Proposition \ref{prop:charpolya1} we proved that only the identity component contributes to the $b_1$-coefficient of the characteristic polynomial.  We will demonstrate that the value obtained for the second moment $M_2[{}^0\mu_1]$ is $(p-1)(q-1)$, so that when averaging over all components we obtain the desired results.

This amounts to computing the sum in Equation \eqref{eqn:MomentsU1g} when $n=2$. In this situation, we are summing over $\beta_i$ values that add to $2$. There are two types of summands to consider: ones where there are only two nonzero values $\beta_i=\beta_j=1$ for some $i\not=j$ and ones where there is only one nonzero value $\beta_i=2$ for some $i$.

The first type does not contribute to the sum because $M_{1}[\mu_{\U(1)}]=0$ and, hence the entire term equals 0. On the other hand, each term of the second type will simplify to 
$$\binom{2}{2,0,\ldots,0}M_{2}[\mu_{\U(1)}]M_{0}[\mu_{\U(1)}]\cdots M_{0}[\mu_{\U(1)}]=2$$
since $M_{0}[\mu_{\U(1)}]=1$ and $M_{2}[\mu_{\U(1)}]=2$. There are exactly $g$ terms of this form since this is essentially counting the number of $g$-tuples with a single nonzero entry. Hence, $M_n[{}^0\mu_1]=2g=(p-1)(q-1)$.

We now consider the size of the component group for each of the subfields of $\mathbb{Q}(\zeta_{pq})$. Over $\mathbb{Q}$, the component group has size $(p-1)(q-1)$. Over $\mathbb{Q}(\zeta_p)$, $\mathbb{Q}(\zeta_q)$, $\mathbb{Q}(\zeta_{pq})$, the sizes are $q-1$, $p-1$, and 1, respectively.  Averaging $M_n[{}^0\mu_1]$ over these values yields the desired results.

\end{proof}

\section{Galois Endomorphism Types and Related Results}\label{sec:GaloisType}
We begin this section with some notation and terminology.
Let $A/F$ be an abelian variety defined over a number field $F$, and let $K$ be the endomorphism field of $A$. We will denote the real endomorphism algebra of $A$ by $\End(A_F)_{\mathbb R} :=\End(A_F)\otimes_{\mathbb{Z}} \mathbb{R}.$  Let $\mathbb H$ denote the quaternions and let $M_n(R)$ denote the $n\times n$ matrix ring over a ring $R$. Wedderburn's structure theorem tells us that the $\mathbb{R}$-algebra $\End(A_F)_\mathbb{R}$  satisfies
\begin{align}\label{eqn:EndoAlg_factor}
    \End(A_F)_\mathbb{R}\simeq \prod_i M_{t_i}(\mathbb{R})\times \prod_i M_{n_i}(\mathbb H)\times  \prod_i M_{p_i}(\mathbb{C}),
\end{align}
for some nonnegative integers $t_i, n_i, p_i$.

Let $\mathcal C$ be the category of pairs $(G,E)$, where $G$ is a finite group and $E$ is an $\mathbb{R}$-algebra equipped with an $\mathbb{R}$-linear action of $G$. The \textbf{Galois endomorphism type} of $A/F$ is the isomorphism class in $\mathcal C$ of the pair $[\Gal(K/F), \End(A_F)_\mathbb{R}]$ \cite[Definition 1.3]{FKS2012}. If $L\subseteq K$ is an intermediate field corresponding to a subgroup $N\subseteq \Gal(K/F)$, then $\End(A_L)_\mathbb{R}\simeq (\End(A_K)_\mathbb{R})^N$ (see, for example, \cite[Section 6]{FS2016}). In this section, we will prove the following result for the Galois endomorphism types of Catalan Jacobians.

\begin{theorem}\label{thm:EndoAlgebraSubfields}
Let $K=\mathbb{Q}(\zeta_{pq})$ and let $L$ be any intermediate field $\mathbb{Q}\subseteq L\subseteq K$. Then
$$\End(\Jac(C_{p,q})_L)_{\mathbb{R}}=
\begin{cases}
\mathbb{R}^{[L:\mathbb{Q}]}&\text{ if } L\subseteq \mathbb{R},\\
\mathbb{C}^{[L:\mathbb{Q}]/2}&\text{ otherwise}. 
\end{cases}
$$
\end{theorem}

Additionally, we will prove the following result regarding the N\'eron-Severi group $\NS(\Jac(C_{p,q})_L)$ of the Catalan Jacobian. 

\begin{theorem}\label{thm:NeronSeveri}
For any intermediate field $\mathbb{Q}\subseteq L\subseteq K$, the rational N\'eron-Severi group of the Catalan Jacobian is $\NS(\Jac(C_{p,q})_L)_{\mathbb{Q}}\simeq \mathbb{Q}^r$, where 
$$r=
\begin{cases}
[L:\mathbb{Q}]&\text{ if } L\subseteq \mathbb{R},\\
\frac12 [L:\mathbb{Q}]&\text{ otherwise}. 
\end{cases}
$$
\end{theorem}

We will prove these results using two methods: using data obtained from moment statistics and working with Rosati forms. The latter is the more traditional method of determining real endomorphism algebras, whereas the former is a new technique using recent results of Costa, Fit\'e, and Sutherland \cite{Costa2019}.

\subsection{Data obtained from moment statistics}\label{sec:EndoAlg_Moments}

We start with some notation. The Sato-Tate group of $\Jac(C_{p,q})$ is equipped with a faithful, self-dual representation $\rho\colon \ST(\Jac(C_{p,q}))\rightarrow \GL(V)$, where $V$ is a $2g$-dimensional $\mathbb{C}$-vector space. We can use this representation to view $\ST(\Jac(C_{p,q}))$ as a compact real Lie subgroup of $\USp(2g)$. From here, we define the following virtual characters of the Sato-Tate group of $\Jac(C_{p,q})$:
$$a_1=\Tr(V),\;\; a_2=\Tr(\wedge^2 V),\;\; s_2=a_1^2-a_2.$$
We will be interested in certain moments of these characters: $M_2[a_1], M_1[a_2],$ and $M_1[s_2]$. These moments satisfy the following equations
$$M_2[a_1]=M_2[\mu_1],\;\; M_1[a_2]=M_1[\mu_2],\;\; M_1[s_2]=M_2[a_1]-2M_1[a_2],$$
where $\mu_1$ and $\mu_2$ are the measures defined in Section \ref{sec:MomentsIntro}.

The moment $M_1[s_2]$  can be interpreted as a Frobenius-Schur indicator of the standard representation of $\ST(A)$ (see the exposition in Remark 5 of \cite{Costa2019} for more details). For an irreducible representation, this value can only be $-1, 0,$ or 1 (see Proposition 39 of \cite{Serre1977}).

We can use the results of \cite{Costa2019} to obtain further identities for these moments. Proposition 1 of \cite{Costa2019} states that, for any  abelian variety $A$ defined over a field $k$,
$$M_2[a_1]=\rk_\mathbb{Z}(\End(A_k)).$$

\begin{proposition}\label{prop:endoring}
The real endomorphism algebra $\End(\Jac(C_{p,q})_\mathbb{Q})_\mathbb{R}$ for any Catalan Jacobian is $\mathbb{R}$.
\end{proposition}

\begin{proof} 
We proved in Proposition \ref{prop:secondmoment} that $M_2[\mu_1]=1$. Thus, $\rk_\mathbb{Z}(\End(\Jac(C_{p,q})_\mathbb{Q}))=1$, and the only possibility for $\End(\Jac( C_{p,q})_\mathbb{Q})_\mathbb{R}$ is $\mathbb{R}$.
\end{proof}

\begin{corollary}
The rank of the N\'eron-Severi group of $\Jac(C_{p,q})_\mathbb{Q}$ is 1.
\end{corollary}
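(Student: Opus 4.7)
The plan is to deduce this corollary directly from Proposition \ref{prop:endoring}, using the standard embedding of the N\'eron-Severi group into the endomorphism algebra. Recall that for a polarized abelian variety $(A,\lambda)$ over a field $k$, there is an injection
\[
\NS(A_k)\otimes\Q \hookrightarrow \End^0(A_k):=\End(A_k)\otimes\Q,
\]
sending a line-bundle class $L$ to $\phi_\lambda^{-1}\circ\phi_L$, and its image coincides with the subspace fixed by the Rosati involution $\dagger$ determined by $\lambda$. In particular, $\rk_\Z\NS(A_k)\le \rk_\Z\End(A_k)$ for any polarized abelian variety.

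First I would invoke Proposition \ref{prop:endoring}, which gives $\End(\Jac(C_{p,q})_\Q)_\R\simeq\R$; equivalently, $\End^0(\Jac(C_{p,q})_\Q)=\Q\cdot\mathrm{id}$ is one-dimensional. Next, since the identity endomorphism is fixed by every Rosati involution, the whole of this one-dimensional algebra lies in the Rosati-fixed subspace, so the image of $\NS(\Jac(C_{p,q})_\Q)\otimes\Q$ in $\End^0(\Jac(C_{p,q})_\Q)$ has $\Q$-dimension exactly~$1$. Finally, the class of the canonical principal polarization of $\Jac(C_{p,q})$ descends to $\Q$ and provides a nonzero element of $\NS(\Jac(C_{p,q})_\Q)$, so the rank is at least $1$ as well.

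There is no genuine obstacle here: the corollary is a formal consequence of the rank-$1$ conclusion of Proposition \ref{prop:endoring} together with the Rosati-fixed-subspace description of $\NS\otimes\Q$. If one preferred a route entirely internal to the framework of \cite{Costa2019}, one could instead apply their moment formula for $M_2[a_1]$ (cited as Proposition~\ref{prop:rankEndo} above) together with Proposition~\ref{prop:secondmoment} to obtain $\rk_\Z\End(\Jac(C_{p,q})_\Q)=1$, and then conclude in the same way via Rosati; either path yields the rank of $\NS(\Jac(C_{p,q})_\Q)$ immediately.
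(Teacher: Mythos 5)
Your proposal takes essentially the same approach as the paper: both deduce the upper bound from Proposition~\ref{prop:endoring} via the embedding of $\NS$ into the endomorphism algebra, and both mention an alternative route through the moment identities of \cite{Costa2019}. You are a bit more explicit than the paper in spelling out the Rosati-fixed-subspace description and in supplying the lower bound from the principal polarization, but the substance is the same.
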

\begin{proof}
This follows from Proposition \ref{prop:endoring} and the fact that the N\'eron-Severi group of a principally polarized abelian variety $A$ embeds into $\End(A)\otimes_\mathbb{Z} \mathbb{Q}$. 

Alternatively, we can use Lemma 3 of \cite{Costa2019} to prove the result.  Applying this lemma to our situation yields the following equation:
$$1-2\cdot \rk_\mathbb{Z}(\NS(\Jac(C_{p,q})_\mathbb{Q}))=-1.$$
Hence, $\rk_\mathbb{Z}(\NS(\Jac(C_{p,q})_\mathbb{Q}))=1.$
\end{proof}
Proposition 2 of \cite{Costa2019} gives an additional identity for the the rank of $\NS(A_F)$:
\begin{align*}\label{eqn:rankNS}
M_1[a_2]=\rk_\mathbb{Z}(\NS(A_F)).    
\end{align*}
Thus, we obtain the following corollary.
\begin{corollary}\label{cor:M1a2moment}
For the Catalan Jacobian $\Jac(C_{p,q})_\mathbb{Q}$, we have
$$M_1[a_2]=1.$$
\end{corollary}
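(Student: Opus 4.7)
The plan is to combine the two ingredients that are already in hand immediately above the statement. First, the preceding corollary establishes that $\rk_\Z(\NS(\Jac(C_{p,q})_\Q))=1$; this was obtained either from Proposition \ref{prop:endoring} together with the embedding of the N\'eron-Severi group of a principally polarized abelian variety into $\End(A)\otimes_\Z \Q$, or directly from Lemma 3 of \cite{Costa2019}.

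Second, Equation \eqref{eqn:rankNS} (Proposition 2 of \cite{Costa2019}) provides the identity
\[
M_1[a_2]=\rk_\Z(\NS(A_F))
\]
for any abelian variety $A$ defined over a field $F$. Applying this identity with $A=\Jac(C_{p,q})$ and $F=\Q$ and substituting the N\'eron-Severi rank computed in the preceding corollary yields $M_1[a_2]=1$, as claimed.

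There is essentially no obstacle here: the corollary is a one-line consequence of the two cited results, so the only step is to make the substitution explicit. The entire content has already been carried out in the paper; this final statement merely records the value of the first moment of $a_2$ that is implied by the N\'eron-Severi rank computation.
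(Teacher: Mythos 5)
Your proof is correct and matches the paper's (implicit) argument exactly: the paper derives the corollary by combining the N\'eron-Severi rank computation with the identity of Equation \eqref{eqn:rankNS} from Proposition 2 of \cite{Costa2019}. There is nothing further to add.
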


\begin{corollary}\label{cor:M1s2moment}
Over $\mathbb{Q}$, the  moment $M_1[s_2]$ equals $-1$ for every Catalan Jacobian. Over $\mathbb{Q}(\zeta_{p}),\mathbb{Q}(\zeta_{q})$, and $\mathbb{Q}(\zeta_{pq})$, the moment is $M_1[s_2]=0$.
\end{corollary}

\begin{proof}
Over $\mathbb{Q}$, we have already proved that $M_2[a_1]=1$ and $M_1[a_2]=1$ in Proposition \ref{prop:secondmoment} and Corollary \ref{cor:M1a2moment}. Thus, $M_1[s_2]=1-2=-1$ as desired.
\end{proof}

In some cases, we can use the results of \cite{Costa2019} to uniquely determine the real endomorphism algebras and ranks of N\'eron-Severi groups. For example, this seems to be possible in dimension 2 and 3 (see the tables of moment statistics in \cite{FKS2012, FKS2021satotate}). However, this is not the case for higher genus Catalan Jacobians. In order to further classify real endomorphism algebras, we will use Rosati forms.

\subsection{Further information from Rosati forms}\label{sec:EndoAlg_Rosati}

Every polarization on an abelian variety induces an anti-involution, called the Rosati involution (see Chapter 5 of \cite{LangeBirkenhake}). Theorem 5.5.6 of \cite{LangeBirkenhake}, combined with the fact that Catalan Jacobians are Type IV in the Albert's classification of abelian varieties, tells us that the Rosati involution on $\End(\Jac(C_{p,q})_K)_{\mathbb{Q}}$ corresponds to complex conjugation on $\mathbb{Z}[\zeta_{pq}]$. We now complete the proofs of Theorems \ref{thm:EndoAlgebraSubfields} and \ref{thm:NeronSeveri}.

\begin{proof}[Proof of Theorem \ref{thm:NeronSeveri}]

For any intermediate field $\mathbb{Q}\subset L\subseteq K$, the symmetric elements of $\End(\Jac(C_{p,q})_L)_{\mathbb{Q}}$ (i.e., those fixed by the Rosati involution) correspond to the elements of $L$ fixed by complex conjugation. Proposition 5.2.1 of \cite{LangeBirkenhake} gives an isomorphism between the set of symmetric elements and the rational N\'eron-Severi group of an abelian variety. Applying this to our situation yields the desired result.

\end{proof}

\begin{proof}[Proof of Theorem \ref{thm:EndoAlgebraSubfields}]
Either the intermediate field $L$ is totally real or it is not. When $L$ is totally real, the Rosati involution is trivial on $\End(\Jac(C_{p,q})_L)\simeq \mathcal O_L$ since it acts as complex conjugation. Hence, $\End(\Jac(C_{p,q})_L)_{\mathbb{R}}$ equals $\mathbb{R}^{[L:\mathbb{Q}]}$. 

Otherwise, by Theorem 5.5.6 of \cite{LangeBirkenhake}, the rational endomorphism algebra of $\Jac(C_{p,q})_L$ admits a positive anti-involution of the second kind (the Rosati involution). The center of $\End(\Jac(C_{p,q})_L)_{\mathbb{Q}}$ is a totally complex quadratic extension of a totally real field. In fact, in our setting, the center equals $\End(\Jac(C_{p,q})_L)_{\mathbb{Q}}$. Thus, by Proposition 5.5.7 of \cite{LangeBirkenhake}, the real endomorphism algebra $\End(\Jac(C_{p,q})_L)_{\mathbb{R}}$ equals $\mathbb{C}^{[L:\mathbb{Q}]/2}$.

\end{proof}

\begin{corollary}\label{cor:EndoAlg_pq}
The real endomorphism algebra $\End(\Jac(C_{p,q})_{K})_\mathbb{R}$ is $\mathbb{C}^g$. 
\end{corollary}
\begin{remark}
This result follows from Theorem \ref{thm:EndoAlgebraSubfields}, but it can also be proved by through computations with Rosati forms. Since $\Jac(C_{p,q})/K$ is nondegenerate, its complex endomorphism algebra  is the subspace of $M_{2g}(\mathbb{C})$ fixed by the action of the identity component of the Sato-Tate group, and the real endomorphism algebra  $\End(\Jac(C_{p,q})_{K})_\mathbb{R}$ is the subspace of half the dimension for which the Rosati form is positive definite (see Definition 2.18 in \cite{FKS2012}). 
\end{remark}

\begin{corollary}
For any intermediate field $L\subseteq K$, the Frobenius-Schur indicator is 
$$M_1[s_2]=\begin{cases}
-[L:\mathbb{Q}] &\text{ if $L$ is totally real},\\
0 &\text{ otherwise.}
\end{cases}$$
\end{corollary}

\begin{proof}
Recall that $M_1[s_2]=M_2[a_1]-2M_1[a_2]$. Propositions 1 and 2 of \cite{Costa2019} show that $M_2[a_1]=\rk_\mathbb{Z}(\End(\Jac(C_{p,q})_L))$ and  $M_1[a_2]=\rk_\mathbb{Z}(\NS(\Jac(C_{p,q})_L))$. The results of Theorem \ref{thm:EndoAlgebraSubfields} tell us that the rank of the endomorphism ring is $[L:\mathbb{Q}]$ for any intermediate field $L$. Combining this with the value of the rank of the N\'eron-Severi group from Theorem \ref{thm:NeronSeveri} yields the result.
\end{proof}


\section*{Acknowledgements}

Many thanks to Francesc Fit\'e, Edgar Costa, and Drew Sutherland while working on Section \ref{sec:EndoAlg_Moments}. Thanks also to Mckenzie West for patiently answering my Sage and GitHub questions while I was working on Table \ref{table:momemntsa1}. I would  like to thank Drew Sutherland for sharing access to a server at MIT while I was computing  the numerical moments given in Table \ref{table:momemntsa1}. 

I am grateful for the support and encouragement of Matt Montesano, Alanna Hoyer-Letizel, and the Rethinking Number Theory Workshop co-organizers, project leaders, and participants. They provided a sense of community during a difficult time, and it would not have been possible to write this article during a pandemic without them.

My research is supported by a PSC-CUNY Award, jointly funded by The Professional Staff Congress and The City University of New York.










\begin{appendices}

\section{Examples of the component group generators}\label{app:component}

In Table \ref{table:gammas} we give examples of the matrices $\gamma_q$ and $\gamma_p$  from Definitions \ref{def:gamma_q} and \ref{def:gamma_p}. These were computed in Sage \cite{Sage} using Sage's chosen generators for   $(\mathbb Z/q\mathbb Z)^{\times}$ and  $(\mathbb Z/p\mathbb Z)^{\times}$.
{
\begin{table}[h]
\begin{tabular}{|c|c|c|}
\hline
$(p,q)$&  $\gamma_p$& $\gamma_q$ \\ 
\hline
(5,3)& $\begin{pmatrix}0&0&0&J\\0&0&I&0\\J&0&0&0\\0&I&0&0 \end{pmatrix}$ & $\begin{pmatrix}0&I&0&0\\I&0&0&0\\0&0&0&J\\0&0&J&0\end{pmatrix}$\\
\hline
(7,3)& $\begin{pmatrix}0&0&0&0&0&J\\0&0&J&0&0&0\\0&0&0&0&I&0\\J&0&0&0&0&0\\0&0&0&I&0&0\\0&J&0&0&0&0 \end{pmatrix}$ & $\begin{pmatrix}0&0&I&0&0&0\\0&0&0&I&0&0\\I&0&0&0&0&0\\0&I&0&0&0&0\\0&0&0&0&0&J\\0&0&0&0&J&0 \end{pmatrix}$\\
\hline
\end{tabular}
\caption{Examples of $\gamma_p$ and $\gamma_q$ matrices for $y^q=x^p-1$.}\label{table:gammas}
\end{table}

}
\end{appendices}

\bibliographystyle{plain}
\bibliography{Catalanbib.bib}
\end{document}